\setlist[enumerate]{label=(\thethm.\arabic*), before={\setcounter{enumi}{\value{equation}}}, after={\setcounter{equation}{\value{enumi}}}}
\newcommand{\Q}{\mathbb{Q}}
\newcommand{\Z}{\mathbb{Z}}
\newcommand{\N}{\mathbb{N}}
\renewcommand{\epsilon}{\varepsilon}
\renewcommand{\leq}{\leqslant}
\newcommand{\holom}[3]{\ensuremath{#1:#2  \rightarrow #3}}
\renewcommand{\geq}{\geqslant}
\newcommand{\Res}{\mathrm{Res}}
\newcommand\sO{{\mathcal O}}
\newcommand{\dbar}{\bar \partial}
\newtheorem{thm}{Theorem}[section]
\newtheorem{lemma}[thm]{Lemma}
\newtheorem{proposition}[thm]{Proposition}
\newtheorem{question}[thm]{Question}
\newtheorem{cor}[thm]{Corollary}
\theoremstyle{remark}
\newtheorem{remark}[thm]{Remark}
\DeclareMathOperator*{\nons}{nons}
\newcommand\sF{{\mathcal F}}
\newcommand\sA{{\mathcal A}}
\newenvironment{remark*}{{\em Remark.}}{}
\numberwithin{equation}{section}
\title{Frobenius integrability of certain $p$-forms on singular spaces} 
\date{October 6, 2022}
\author{}
\author{Junyan Cao}
\author{Andreas H\"oring}
\address{Junyan Cao, Universit\'e C\^ote d'Azur, CNRS, LJAD, France, Institut universitaire de France}
\email{junyan.cao@univ-cotedazur.fr}
\address{Andreas H\"oring, Universit\'e C\^ote d'Azur, CNRS, LJAD, France, Institut universitaire de France}
\email{Andreas.Hoering@univ-cotedazur.fr}
\subjclass[2010]{14E30, 32Q15, 32J25}
\keywords{holomorphic $p$-forms, klt spaces, foliations}
\begin{document}

\begin{abstract}
Demailly proved that on a smooth compact K\"ahler manifold the distribution defined by a holomorphic $p$-form with values in an anti-pseudoeffective line bundle is always integrable. We generalise his result to compact K\"ahler spaces with klt singularities.
\end{abstract}

\maketitle

\section{Introduction}

Let $X$ be a compact K\"ahler manifold, and let $u \in H^0(X, \Omega_X^p)$ be a holomorphic $p$-form on $X$. As a consequence of the K\"ahler identity for the Laplacians $\Delta_d = 2 \Delta_{\partial}$ one obtains that the holomorphic form is $d$-closed, i.e. $d u=0$. Twenty years ago Jean-Pierre Demailly used a very clever ``integration by parts'' to generalise this statement to forms with values in certain line bundles:

\begin{thm} \cite[Main thm]{Dem02}  \label{theorem-demailly}
Let $X$ be a compact K\"ahler manifold. Let $(L,h)$ be a holomorphic line bundle on $X$ where $h$ is a possibly singular metric such that  $i\Theta_h (L)\geq 0$ on $X$ in the sense of currents. 
Let 
$$
u \in H^0(X, \Omega_X^p \otimes L^\star)
$$
be a non-zero holomorphic section, and let 
$S_u \subset T_X$ be the saturated subsheaf given by vector fields $\xi$ such that the contraction $i_\xi u$ vanishes.

Then one has $D' _{h^\star} u =0$. Hence $S_u$ is integrable, i.e. it defines a (possibly singular) foliation on $X$, and $(L,h)$ has flat curvature along the leaves.
\end{thm}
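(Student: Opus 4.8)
The plan is to prove the vanishing $D'_{h^\star}u=0$ by a global integration-by-parts (Bochner-type) argument, and then to read off both the integrability of $S_u$ and the flatness of $(L,h)$ along the leaves as formal consequences. Fix a Kähler form $\omega$ on $X$ and work locally with a plurisubharmonic weight $\varphi$ for $h$, so that the dual metric $h^\star$ on $L^\star$ has weight $e^{\varphi}$ and curvature $i\Theta_{h^\star}(L^\star)=-i\partial\bar\partial\varphi\leq 0$. Since $u$ is holomorphic we have $\bar\partial u=0$, so the Chern connection satisfies $D_{h^\star}u=D'_{h^\star}u$; write $w:=D'_{h^\star}u$, a $(p+1,0)$-form valued in $L^\star$. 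Denote by $\{\alpha,\beta\}$ the scalar-valued form obtained from $\alpha\wedge\bar\beta$ by contracting the bundle factors via $h^\star$. Because $i^{(p+1)^2}\{w,w\}\wedge\omega^{n-p-1}$ is a nonnegative multiple of the volume form, it suffices to prove $\int_X\{w,w\}\wedge\omega^{n-p-1}=0$.

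The heart of the argument is the following identity. Using $(D'_{h^\star})^2=0$ and $\bar\partial u=0$ one computes $D_{h^\star}w=\bar\partial w=\Theta_{h^\star}(L^\star)\wedge u$, and the Leibniz rule for the Chern connection gives
\[
d\{u,w\}=\{w,w\}+(-1)^p\{u,\Theta_{h^\star}(L^\star)\wedge u\}.
\]
Wedging with $\omega^{n-p-1}$, which is $d$-closed since $\omega$ is Kähler, and integrating over the compact manifold $X$, Stokes' theorem annihilates the left-hand side and yields
\[
\int_X\{w,w\}\wedge\omega^{n-p-1}=-(-1)^p\int_X\{u,\Theta_{h^\star}(L^\star)\wedge u\}\wedge\omega^{n-p-1}.
\]
With the correct $i^{k^2}$ normalisations the left-hand side is $\geq 0$, while on the right-hand side the factor $i\Theta_{h^\star}(L^\star)=-i\partial\bar\partial\varphi$ is a $\leq 0$ form wedged against the nonnegative $(n-1,n-1)$-form $i^{p^2}\{u,u\}\wedge\omega^{n-p-1}$, so the right-hand side is $\leq 0$. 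Both sides therefore vanish, and positivity of the integrand forces $w=D'_{h^\star}u=0$.

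The main obstacle is that $h$ is only a \emph{singular} metric: $\varphi$ is merely plurisubharmonic, so $e^{\varphi}\,\tilde{u}\wedge\overline{\tilde{u}}$ need not be integrable and $i\partial\bar\partial\varphi$ is only a current. I would handle this by regularisation: replace $\varphi$ by a decreasing sequence of smooth weights $\varphi_\varepsilon$ with $i\partial\bar\partial\varphi_\varepsilon\geq-\varepsilon\,\omega$ (Demailly's regularisation of quasi-plurisubharmonic functions). Running the identity above for each $\varphi_\varepsilon$, the curvature term is now bounded in absolute value by $\varepsilon\int_X i^{p^2}\{u,u\}_\varepsilon\wedge\omega^{n-p}\to 0$, while the left-hand side controls $D'_{h^\star_\varepsilon}u$. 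Passing to the limit, using monotone/dominated convergence for $e^{\varphi_\varepsilon}$ and lower semicontinuity of the $L^2$-norm, gives $D'_{h^\star}u=0$. Making this limiting procedure rigorous, together with the a priori meaning of the singular objects, is the technically delicate step.

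Finally, the geometric conclusions follow formally from $D'_{h^\star}u=0$, which locally reads $\partial\tilde{u}=-\partial\varphi\wedge\tilde{u}$. For local vector fields $\xi,\eta$ in $S_u$ (so $i_\xi u=i_\eta u=0$), Cartan's formula gives $i_{[\xi,\eta]}u=-i_\eta\,\mathcal{L}_\xi u=-i_\eta i_\xi\,\partial\tilde{u}=i_\eta i_\xi(\partial\varphi\wedge\tilde{u})$, and expanding the two contractions every surviving term carries a factor $i_\xi\tilde{u}$ or $i_\eta\tilde{u}$ and hence vanishes; thus $[\xi,\eta]\in S_u$ and $S_u$ is integrable, defining a foliation after saturating over the locus where $u$ has maximal rank. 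The same relation shows that the curvature $i\Theta_h(L)$ annihilates the leaf directions, i.e. $(L,h)$ is flat along the leaves. These identities are first verified on the dense open set away from the singularities of $h$ and of $S_u$, and then extend by continuity and saturation.
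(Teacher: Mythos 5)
Your central identity --- $d\{u,w\}=\{w,w\}+(-1)^p\{u,\Theta_{h^\star}(L^\star)\wedge u\}$ for $w=D'_{h^\star}u$, followed by Stokes against $\omega^{n-p-1}$ and the sign comparison of the two resulting terms --- is exactly Demailly's integration by parts, i.e.\ the same computation that underlies \cite{Dem02} and Propositions \ref{proposition-smooth-metric}, \ref{l2version} and \ref{pless2} of this paper, and for a \emph{smooth} metric $h$ it is a complete proof. The genuine gap is the step you yourself flag as delicate, and it cannot be repaired as stated: for a general pseudoeffective $(L,h)$ there is \emph{no} sequence of smooth weights $\varphi_\varepsilon$ decreasing to $\varphi$ with $i\partial\bar\partial\varphi_\varepsilon\geq -\varepsilon\omega_X$. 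The existence, for every $\varepsilon>0$, of a smooth metric on $L$ with curvature $\geq -\varepsilon\omega_X$ is precisely the definition of $L$ being \emph{nef}, and pseudoeffective does not imply nef: if $E$ is the exceptional curve of a blown-up surface and $L=\mathcal{O}(E)$ carries the singular metric induced by the canonical section, then $i\Theta_h(L)=[E]\geq 0$, while $L\cdot E=-1$ excludes any smooth metric of curvature $\geq-\varepsilon\omega_X$ once $\varepsilon$ is small. Demailly's regularisation theorem on a compact manifold necessarily loses positivity by an amount controlled from below by the Lelong numbers of $\varphi$; the loss can be made $\leq\varepsilon$ only if one allows $\varphi_\varepsilon$ to retain analytic singularities, in which case you are again integrating by parts across a singular set and nothing has been gained (compare the remark following Proposition \ref{proposition-smooth-metric}). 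Since the whole interest of the theorem lies in metrics with nonzero Lelong numbers, your reduction proves only the nef case, which is essentially Proposition \ref{proposition-smooth-metric}.

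The proofs that do work (\cite{Dem02}, and Propositions \ref{l2version} and \ref{pless2} here) never smooth the metric: they work directly with the singular weight. The saving facts are that the \emph{dual} weight $e^{\varphi}$ is locally bounded above because $\varphi$ is quasi-psh, so $\{u,u\}_{h^\star}$ has bounded coefficients and $e^{\varphi}D'_{h^\star}u=\partial(e^{\varphi}u)$ is a well-defined current; one then establishes an a priori $L^2$ bound for $D'_{h^\star}u$ and justifies the Stokes identity by means of cutoff functions, absorbing the term involving derivatives of the cutoff via Cauchy--Schwarz --- this is what ``we can still do the integration by parts as in \cite{Dem02}'' refers to in the proof of Proposition \ref{l2version}. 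A minor additional imprecision: in your final paragraph the pointwise relation $\partial u=-\partial\varphi\wedge u$ is not available ``away from the singularities of $h$'', because the polar set of $\varphi$ need not be contained in a closed proper subset off which $\varphi$ is smooth; one should instead contract the current identity $\partial(e^{\varphi}u)=0$ with $\xi,\eta$, where all problematic products vanish because $i_\xi u$ and $i_\eta u$ are identically zero, and conclude $i_{[\xi,\eta]}u=0$ outside a pluripolar set, hence everywhere by continuity.
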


Demailly's main motivation for this result was to prove that if a compact K\"ahler manifold admits a contact structure, then the canonical bundle $K_X$ is never pseudoeffective \cite[Cor.2]{Dem02}. Moreover Theorem \ref{theorem-demailly} has turned out to be a very efficient tool for the study of foliations with vanishing first Chern class \cite{PT13, LPT18, GKP21}. In view of the increased interest in foliations on singular spaces  (cf. e.g. \cite{CS21,Dru21}) it seems worthwhile to look at Demailly's arguments in this setting. Our main result is:

\begin{thm} \label{theorem-main}
Let $Y$ be a normal compact K\"ahler space with klt singularities.
Let $\sA$ be a rank one reflexive sheaf such that the reflexive power $\sA^{[m]}$ is locally free and pseudoeffective for some $m \in \N$. Let 
$$
u \in H^0(Y, (\Omega_Y^p \otimes \sA^\star)^{\star\star})
$$
be a non-zero holomorphic section. Let $S_u \subset T_Y$ be the saturated subsheaf given by vector field $\xi$ such that the contraction $i_\xi u$ vanishes.
Then $S_u$ is integrable, i.e. it defines a (possibly singular) foliation on $Y$.
\end{thm}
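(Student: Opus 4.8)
\emph{Overview.} The plan is to reduce the integrability of $S_u$ to the smooth locus $Y_{\mathrm{reg}}$, where $\sA$ becomes an honest line bundle and $u$ a genuine twisted holomorphic form, and there to re-run Demailly's integration-by-parts argument; the only genuinely new feature compared with Theorem \ref{theorem-demailly} is the non-compactness produced by the singularities, which the klt hypothesis is designed to control.

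\emph{Reduction to the smooth locus.} Integrability of the saturated subsheaf $S_u$ is the vanishing of the O'Neill tensor
$$A\colon \textstyle\bigwedge^2 S_u \longrightarrow T_Y/S_u,\qquad \xi\wedge\eta\longmapsto [\xi,\eta]\bmod S_u,$$
which is $\O_Y$-linear because the Leibniz error $(\xi f)\eta$ lands in $S_u$. Since $S_u$ is saturated, $T_Y/S_u$ is torsion-free, hence so is $\mathcal{H}om(\bigwedge^2 S_u,\,T_Y/S_u)$, and a section of a torsion-free sheaf that vanishes on the dense open $Y_{\mathrm{reg}}$ (whose complement has $\codim\ge 2$ by normality) vanishes on all of $Y$. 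Thus I first reduce to proving $A\equiv 0$ on $Y_{\mathrm{reg}}$.

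\emph{Set-up and the pointwise criterion.} Over $Y_{\mathrm{reg}}$ the sheaf $\sA$ is a line bundle $L$ and $u$ is a holomorphic $(p,0)$-form valued in $L^\star$ with kernel distribution $S_u$. Fixing a Kähler form $\om$ on $Y$ and taking the $m$-th root of a positively curved singular metric on the pseudoeffective $\sA^{[m]}$, I obtain a singular metric $h$ on $L$ with $i\Theta_h(L)\ge 0$, so $i\Theta_{h^\star}(L^\star)\le 0$. As in Demailly's proof it then suffices to show $D'_{h^\star}u=0$: locally this reads $\partial u=\theta\wedge u$ for a $(1,0)$-form $\theta$, and together with $\db u=0$ it gives $du=\theta\wedge u$; a Cartan-calculus computation yields $i_{[\xi,\eta]}u=-i_\eta i_\xi du=0$ for $\xi,\eta\in S_u$, i.e. $A=0$. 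To establish $D'_{h^\star}u=0$ I would set $T:=D'_{h^\star}u$ and use Demailly's identity expressing the global norm $\int \lVert T\rVert^2_{\om,h^\star}\,\om^n$ as a manifestly non-negative term plus a curvature contribution proportional to $\int i\Theta_h(L)\wedge\{u,u\}_{h^\star}\wedge\om^{n-p-1}\le 0$; on a compact manifold this forces $T=0$.

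\emph{Cut-offs, regularisation, and the main obstacle.} On the non-compact $Y_{\mathrm{reg}}$ the integration by parts produces boundary contributions, so I would insert cut-off functions $\chi_\ep$ equal to $1$ away from an $\ep$-neighbourhood of $Y_{\mathrm{sing}}$ and vanishing near it, carry out the computation against $\chi_\ep$, regularise the singular metric $h$ by a decreasing family of smooth metrics with $i\Theta\ge -\ep\om$, and let $\ep\to 0$. The hard part will be to show that the error terms $\int d\chi_\ep\wedge(\cdots)$ and the regularisation errors vanish in the limit and that all integrals converge. This is exactly where the klt hypothesis is indispensable: normality ($\codim Y_{\mathrm{sing}}\ge 2$) provides cut-offs of small capacity, but one still needs $\lVert u\rVert^2_{\om,h^\star}$ and $\lVert D'_{h^\star}u\rVert^2_{\om,h^\star}$ to be integrable near $Y_{\mathrm{sing}}$, and it is the klt condition that controls the volume and the behaviour of $u$ and its derivative there so that $\int_{Y_{\mathrm{reg}}}\lvert d\chi_\ep\rvert^2_\om\,\lVert u\rVert^2\,\om^n\to 0$. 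Controlling the two intertwined limits uniformly, via the klt volume bound, is the heart of the argument; once it is carried out one gets $D'_{h^\star}u=0$ on $Y_{\mathrm{reg}}$ and hence, by the reduction step, the integrability of $S_u$ on $Y$.
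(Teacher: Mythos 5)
Your first two steps are sound and in fact agree with the paper's own reduction: integrability of the saturated sheaf $S_u$ can be checked on any dense Zariski-open set because the O'Neill tensor is $\sO_Y$-linear with torsion-free target, and Demailly's pointwise criterion reduces everything to proving $D'_{h^\star}u=0$ for a suitable positively curved metric. The gap is in the third step, which you yourself call the heart of the argument: you assert that the klt hypothesis supplies the volume/integrability control needed to kill the cut-off and regularisation errors near $Y_{\mathrm{sing}}$, but this is not true in any direct sense, and it is not how klt enters. The twist $\sA$ is only $\Q$-Cartier, and on a resolution $\pi: X \to Y$ the section $u$ (viewed through the saturation of $\pi^\star\sA$) genuinely acquires \emph{logarithmic poles} along exceptional components; equivalently, $\|u\|_{\omega,h^\star}$ is unbounded near $Y_{\mathrm{sing}}$ because $\pi^\star\omega$ degenerates along $E$. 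Consequently the capacity argument fails: $\codim Y_{\mathrm{sing}}\geq 2$ gives cut-offs $\chi_\ep$ with $\int|d\chi_\ep|^2\omega^n\to 0$ against a \emph{bounded} integrand, not against the unbounded $\|u\|^2$. Moreover the regularisation you invoke (smooth metrics with curvature $\geq -\ep\omega$) is a statement about compact manifolds and is unavailable on the noncompact $Y_{\mathrm{reg}}$; neither Demailly's proof nor the paper's regularises the metric at all — they use that the quasi-psh weight is locally bounded above, so all dual-metric norms are finite and $D'_{h^\star}u$ is locally $L^2$.

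What actually closes the gap in the paper is an algebraic mechanism that a volume bound cannot replace. One must work on a log resolution with the saturation $L$ of $\pi^\star\sA$ inside $\Omega_X^p(\log E)$: the log poles are forced, because the saturation inside $\Omega_X^p$ (without log poles) need not be pseudoeffective \cite{GKP12,Ou14}, whereas with log poles Lemma \ref{lemma-pullback} produces a nonzero map $\pi^\star\sA^{[m]}\to L^{\otimes m}$, hence pseudoeffectivity of $L$. The price is that the integration by parts of Proposition \ref{l2version} only goes through under the condition \eqref{upper}, i.e. the weight $\varphi$ of the metric on $L$ must be at least as singular as $-2\ln(-\ln|s_{E_i}|)$ along every component $E_i$ where $\Res_{E_i}(\tilde u)\neq 0$. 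This is exactly where klt is used: the residue lemma (Lemma \ref{lemma-residue}), proved via index-one covers and the extension theorem for reflexive forms on klt spaces, shows that a nonzero residue forces $a_i>0$ in $L^{\otimes m}=\pi^\star\sA^{[m]}\otimes\sO_X(\sum a_iE_i)$, so the induced metric on $L$ has strictly positive Lelong number along precisely those components, which implies \eqref{upper}. In short, klt enters through a residue/discrepancy statement that lets one \emph{choose} a metric whose singularities compensate the poles of the form — not through integrability of $u$ against klt volumes. Your outline stops exactly where this work begins.
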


For applications in foliation theory it is interesting to verify if $\sA$ has flat curvature
along the leaves of $S_u$. Since $\sA$ is not locally free the precise formulation  would be a bit awkward, but flatness holds for the corresponding line bundle $(L,h)$ 
on a resolution of singularities (cf. Proposition \ref{l2version} and Proposition \ref{pless2}).

Our basic strategy is similar to the proof of Theorem \ref{theorem-demailly}, except that we have to carry out the computation on a resolution of singularities $\holom{\pi}{X}{Y}$. If $\sA$ is not locally free this leads to some well-known difficulties, for example the saturation of $\pi^\star \sA$ in $\Omega^p_{X}$ is not always pseudoeffective \cite{GKP12, Ou14}. Therefore we consider forms with logarithmic poles along the exceptional divisor $E$ of the resolution $\pi$, in particular we obtain that the saturation in $\Omega^p_{X}(\log E)$ is pseudoeffective, cf. Corollary \ref{corollary-pseff}. 

This leads us to the following problem:

\begin{question} \label{lccase}
Let $(X,\omega_X)$ be a compact K\"ahler manifold, and let $E=\sum E_i$ be a snc divisor. Let $(L,h)$ be a holomorphic line bundle on $X$ where $h$ is a possibly singular metric such that  $i\Theta_h (L)\geq 0$ on $X$ in the sense of currents. Let $(L^\star, h^\star)$ be the dual metric.  

Let $u\in H^0 (X, \Omega_X ^p (\log E) \otimes L^\star)$. Can we prove that $D' _{h^\star} u =0$ on $X\setminus E$, where $D' _{h^\star}$ is the connection with respect to $h^\star$ ?	
\end{question}

If $p=1$, the problem is totally solved in \cite[Thm 5]{Tou16}\footnote{We thank St\'ephane Druel and Daniel Greb for bringing this reference to our attention.}. It is still open when $p\geq 2$.
We give a positive answer to this question when the metric $h$ is smooth (Proposition \ref{proposition-smooth-metric}). Our main technical result (Proposition \ref{l2version}) gives a positive answer making an assumption on the singularity of $h$ along certain irreducible components $E_i$. This integrability condition can be verified for a resolution of singularities $X \rightarrow Y$ of a klt space. 
When $p=1$, by using the techniques in our article, we can also give an alternative proof of \cite[Thm 5]{Tou16}, cf. Proposition \ref{pless2}. It will imply the following property:

\begin{proposition} \label{proposition-lc}
Let $Y$ be a normal compact K\"ahler space with lc singularities.
Let $\sA$ be a rank one reflexive sheaf such that the reflexive power $\sA^{[m]}$ is locally free and pseudoeffective for some $m \in \N$. Let 
$$
u \in H^0(Y, (\Omega_Y \otimes \sA^\star)^{\star\star})
$$
be a non-zero holomorphic section. Let $S_u \subset T_Y$ be the saturated subsheaf given by vector field $\xi$ such that the contraction $i_\xi u$ vanishes.
Then $S_u$ is integrable, i.e. it defines a (possibly singular) foliation on $Y$.
\end{proposition}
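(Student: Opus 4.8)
The plan is to reduce the statement to the logarithmic situation on a resolution of singularities and then invoke Proposition \ref{pless2}, following the strategy of Theorem \ref{theorem-main} but exploiting that for $p=1$ the integrability of logarithmic forms is available without any extra hypothesis on the singularities of the metric. Since integrability of $S_u$ is a condition on the Lie bracket that can be checked locally away from a set of codimension $\geq 2$, and $S_u \subset T_Y$ is saturated, hence reflexive, it will suffice to establish it on the smooth locus $Y_{\mathrm{reg}}$ and then propagate it to all of $Y$.

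First I would choose a log resolution $\holom{\pi}{X}{Y}$ which is an isomorphism over $Y_{\mathrm{reg}}$ and whose exceptional locus $E = \sum E_i$ is an snc divisor. Viewing $u$ on $Y_{\mathrm{reg}}$ as a morphism $\sA \to \Omega^1_Y$, its pullback extends across $E$ to a morphism into $\Omega^1_X(\log E)$; confining the poles along $E$ to logarithmic order is exactly what the lc hypothesis grants, and this is the point where lc (rather than a worse singularity type) is genuinely used. The saturation $L \subset \Omega^1_X(\log E)$ of the image of this morphism is then a pseudoeffective line bundle by Corollary \ref{corollary-pseff}, so it carries a singular metric $h$ with $i\Theta_h(L) \geq 0$, and the inclusion $L \hookrightarrow \Omega^1_X(\log E)$ defines a non-zero section $\hat u \in H^0(X, \Omega^1_X(\log E) \otimes L^\star)$ whose kernel $S_{\hat u} = \ker \hat u$ restricts, over $\pi^{-1}(Y_{\mathrm{reg}})$, to the pullback of $S_u$.

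Next I would apply Proposition \ref{pless2} to $\hat u$ together with the dual metric $h^\star$, obtaining $D'_{h^\star} \hat u = 0$ on $X \sm E$. Exactly as in the conclusion of Demailly's Theorem \ref{theorem-demailly}, this vanishing forces the distribution $S_{\hat u}$ to be integrable on $X \sm E$. Transporting the Frobenius condition through the isomorphism $\pi^{-1}(Y_{\mathrm{reg}}) \cong Y_{\mathrm{reg}}$ shows that $S_u$ is integrable over $Y_{\mathrm{reg}}$. Finally I would extend integrability across the singular locus: the obstruction is the $\O_Y$-linear Frobenius tensor $\wedge^2 S_u \to T_Y/S_u$, $(\xi,\eta) \mapsto [\xi,\eta] \bmod S_u$, a section of a reflexive sheaf on $Y$; it vanishes on $Y_{\mathrm{reg}}$, whose complement has codimension $\geq 2$ by normality, so by reflexive (Hartogs) extension it vanishes identically, and $S_u$ is integrable on $Y$.

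I expect the main obstacle to be the first reduction, namely verifying that the reflexive section $u$ really yields a logarithmic section valued in a pseudoeffective line bundle on $X$: one must control the order of the poles of the pullback along each $E_i$ and confirm that the saturation stays pseudoeffective. This is precisely where the lc assumption is indispensable, and it is also why the argument is confined to $p=1$: the analytic input, Proposition \ref{pless2}, is available for one-forms (through Touzet's theorem) without the hypothesis on the singularities of $h$ that Proposition \ref{l2version} requires when $p \geq 2$ and that can be verified only in the klt case.
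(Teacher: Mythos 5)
Your proposal is correct and takes essentially the same route as the paper's proof: pull back to a log resolution using the lc cotangent map $\pi^\star \Omega_Y^{[1]} \to \Omega^1_X(\log E)$, saturate to obtain a pseudoeffective line bundle $L \subset \Omega^1_X(\log E)$ (Lemma \ref{lemma-pullback}, Corollary \ref{corollary-pseff}), equip $L$ with an arbitrary singular metric of positive curvature current, and conclude via Proposition \ref{pless2} together with Demailly's exterior-derivative formula. Your explicit final step propagating integrability across $Y_{\mathrm{sing}}$ via the $\mathcal{O}_Y$-linear Frobenius tensor is left implicit in the paper (which only notes that $S_{\tilde u}$ and $S_u$ agree on a Zariski open set), but it is the same standard observation.
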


Patrick Graf indicated an alternative path of proof of Proposition \ref{proposition-lc}: by \cite[Thm.1.4]{GK14}\footnote{The statement is formulated for algebraic varieties, but in view of \cite{KS21} should hold for analytic spaces.} a holomorphic $1$-form on the smooth locus of a log-canonical space
extends to a resolution, even without admitting logarithmic poles. Therefore we can copy the proof of Theorem \ref{theorem-main} and verify the technical condition of Proposition \ref{l2version}. Note that \cite[Thm.1.6]{GK14} gives an example of a $2$-form on a $3$-fold  that does not extend to a resolution unless we admit logarithmic poles. Therefore this approach does not allow to generalise Proposition \ref{proposition-lc} to forms in $(\Omega_Y^p \otimes \sA^\star)^{\star\star}$ with $p \geq 2$.

{\bf Acknowledgements.} This work was initiated by our discussions with Mihai P\v aun
during his stay in Nice in the spring 2022 for which we are very grateful.
We thank Patrick Graf and Stefan Kebekus for communications on forms on singular spaces, St\'ephane Druel, Daniel Greb, Wenhao Ou, Mihai P\v aun and Fr\'ed\'eric Touzet for their valuable comments on the article.
The Institut Universitaire de France and A.N.R project Karmapolis (ANR-21-CE40-0010)  provided excellent working conditions for this project.

\section{Notation and terminology}

For general definitions in complex and algebraic geometry we refer to \cite{Har77, Dem12},
for the terminology of singularities of the MMP we refer to \cite{KM98}.
Manifolds and normal complex spaces will always be supposed to be irreducible.

Given a normal complex space $Y$, we denote by $\Omega_Y^{[p]}:= (\Omega_Y^p)^{\star\star}$ the sheaf of holomorphic reflexive $p$-forms. If $Y$ has klt singularities we know by \cite[Thm.1.1]{KS21} that this coincides with the sheaf of holomorphic $p$-forms that extend to a resolution of singularities $X \rightarrow Y$.

For a reflexive sheaf $\sF$ on $Y$, we denote by $\sF^{[m]}:=(\sF^{\otimes m})^{\star\star}$
the $m$-th reflexive power. Given a surjective morphism $\holom{\varphi}{X}{Y}$ we denote by $\varphi^{[\star]} \sF$ the reflexive pull-back $(\varphi^\star \sF)^{\star\star}$.

\section{Twisted logarithmic forms}
	
\begin{proposition} \label{proposition-smooth-metric}
 Let $X$ be a compact K\"ahler manifold, and let $E=\sum E_i$ be a snc divisor. Let $(L,h)$ be a holomorphic line bundle on $X$ where $h$ is a smooth metric such that $i\Theta_h (L)\geq 0$. Let $u\in H^0 (X, \Omega_X ^p (\log E) \otimes L^\star)$ and $(L^\star, h^\star)$ be the dual metric on $(L,h)$. Then $D' _{h^\star} u =0$ on $X$ and $i\Theta_h (L) \wedge u\wedge \overline{u} =0$.
\end{proposition}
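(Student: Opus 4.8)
The plan is to run Demailly's integration-by-parts argument on the complement $X\setminus E$, where $u$ is a genuine smooth $L^\star$-valued $(p,0)$-form, and to control the contributions of the logarithmic poles along $E$ by an induction on the degree $p$. Since $u$ is holomorphic, $\bar\partial u=0$, so the Chern connection of $h^\star$ satisfies $D_{h^\star}u=D'_{h^\star}u=:v$, a $(p+1,0)$-form with values in $L^\star$ (logarithmic along $E$). Decomposing the curvature by type and using $\bar\partial u=0$ gives the basic relation
\[
\bar\partial v=\Theta_{h^\star}(L^\star)\wedge u=-\Theta_h(L)\wedge u \qquad\text{on } X\setminus E .
\]
Writing $n=\dim_{\CC}X$ and $W:=c_p\,\{u,v\}_{h^\star}\wedge\omega^{n-p-1}$ for the natural sesquilinear pairing (a form of bidegree $(n-1,n)$), a direct computation using this relation and $\bar\partial\omega=0$ shows that on $X\setminus E$
\[
\partial W=|v|^2_{h^\star,\omega}\,dV_\omega+c\,\,i\Theta_h(L)\wedge\{u,u\}_{h^\star}\wedge\omega^{n-p-1}
\]
for a suitable positive constant $c$. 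Both summands are nonnegative: the first is a pointwise norm, and the second because $i\Theta_h(L)\ge 0$ and $\{u,u\}_{h^\star}\wedge\omega^{n-p-1}$ is a positive $(n-1,n-1)$-form.

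Next I would integrate against cut-off functions $\chi_\epsilon$ equal to $1$ outside an $\epsilon$-neighbourhood of $E$ and vanishing near $E$. As $\chi_\epsilon W$ is smooth on the compact manifold $X$, Stokes' theorem gives $\int_X\chi_\epsilon\,\partial W=-\int_X\partial\chi_\epsilon\wedge W$, hence
\[
\int_X\chi_\epsilon\Big(|v|^2_{h^\star,\omega}\,dV_\omega+c\,\,i\Theta_h(L)\wedge\{u,u\}_{h^\star}\wedge\omega^{n-p-1}\Big)=-\int_X\partial\chi_\epsilon\wedge W .
\]
The left-hand side is a sum of two nonnegative integrals which increase to their values over $X\setminus E$ as $\epsilon\to0$; everything thus reduces to the limit of the boundary term on the right.

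The analysis of $\int_X\partial\chi_\epsilon\wedge W$ is the heart of the matter. Near a component $E_i=\{z_i=0\}$ write $u=\tfrac{dz_i}{z_i}\wedge a+b$ with $a,b$ smooth and free of $dz_i$. Since $\partial\chi_\epsilon$ is proportional to $dz_i$, it annihilates the most singular part $\tfrac{dz_i}{z_i}\wedge a$, so the leading $|z_i|^{-2}$ singularity of $W$ drops out. A direct estimate then shows that as $\epsilon\to0$ all genuinely vanishing pieces are $O(\epsilon)$, while the only surviving contribution localises on $E_i$ and equals a universal nonzero constant times a residue pairing of the form $\int_{E_i}(\text{residue-free restriction of }u)\wedge\overline{\Res_{E_i}(D'_{h^\star}u)}\wedge\omega^{n-p-1}$; the codimension-two crossings $E_i\cap E_j$ are negligible in the limit. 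The decisive point is that, $h$ being smooth, the residue intertwines the connections: $\Res_{E_i}(D'_{h^\star}u)=D'_{h^\star|_{E_i}}\rho_i$, where $\rho_i:=\Res_{E_i}u$ is a holomorphic logarithmic $(p-1)$-form on the compact K\"ahler manifold $E_i$ with values in $(L^\star)|_{E_i}$, and $(L|_{E_i},h|_{E_i})$ still carries a smooth semipositive metric.

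This is precisely the hypothesis of the proposition in dimension $n-1$ and degree $p-1$. I would therefore argue by induction on $p$: the base case $p=0$ involves no logarithmic poles and is Demailly's original computation (a holomorphic section of an anti-pseudoeffective line bundle is $D'$-parallel where it is nonzero). For $p\ge1$ the inductive hypothesis applied to each residue $\rho_i$ gives $D'_{h^\star|_{E_i}}\rho_i=0$, so every boundary residue pairing vanishes and $\int_X\partial\chi_\epsilon\wedge W\to0$. Feeding this back into the identity forces both nonnegative bulk integrals to vanish, whence $v=D'_{h^\star}u=0$ and $i\Theta_h(L)\wedge u\wedge\overline u=0$ on $X\setminus E$; as the forms are logarithmic, these identities extend across $E$. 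The step I expect to be the main obstacle is the rigorous bookkeeping of the boundary term — pinning down exactly which residue pairing survives in the limit, verifying the commutation $\Res_{E_i}\circ D'_{h^\star}=D'_{h^\star|_{E_i}}\circ\Res_{E_i}$, and checking that the crossings contribute nothing — rather than the positivity and Stokes steps, which are routine.
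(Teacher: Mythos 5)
Your proposal is correct and follows essentially the same route as the paper: the key ingredients — the commutation relation $\Res_{E_i}(D'_{h^\star}u)=\pm D'_{h^\star}\Res_{E_i}(u)$, an induction (yours on $p$, the paper's on dimension, which is the same descent since residues drop both) showing that the residues of $D'_{h^\star}u$ vanish, and Demailly's integration by parts against cutoffs with the two nonnegative bulk terms — are exactly those of the paper's proof. The only organizational difference is that the paper first uses the residue vanishing to conclude that $D'_{h^\star}u$ is smooth on all of $X$ (its Step 1) and then runs the integration by parts with the cutoff $\Xi_N(\log(-\log|s_E|))$, so the boundary term dies by a soft estimate, whereas you keep naive $\epsilon$-tube cutoffs, identify the limiting boundary term as the residue pairing, and kill it by the same induction.
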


\begin{proof}
If $L$ is a trivial line bundle, it is done by \cite{Nog95}. We generalize it to the twisted setting by the following argument.

{\em Step 1:}  Since $h$ is a smooth metric, we know that $D' _{h^\star} u \in C^\infty (X, \Omega^{p+1}_X (\log E) \otimes L^\star)$. We show in this step that 
$D' _{h^\star} u \in C^\infty (X, \Omega^{p+1}_X  \otimes L^\star)$.

We consider the residue  of $u$ and  $D' _{h^\star} u$ on $E_i$. 
First of all, by a direct calculation, we have 
\begin{equation}\label{equality1}
\Res_{E_i} (D' _{h^\star} u) = - D'_{h^\star} \Res_{E_i} (u) \qquad\text{ on } E_i .
\end{equation}	
In fact, let $\Omega$ be a neighborhood of a generic point of $E_i$. We suppose that $E_i$ is defined by $z_1=0$ and  $h =e^{-\varphi}$ on $\Omega$. 
Then we can write
$$u = \frac{dz_1}{z_1} \wedge f +g$$ 
for two smooth forms $f, g$ on $\Omega$. 

For the RHS of \eqref{equality1}, since $\Res_{E_i} (u)= f$ and  we obtain 
$$ - D'_{h^\star} \Res_{E_i} (u) = -(\partial f + \partial \varphi \wedge f) |_{E_i} .$$ 
For the LHS of \eqref{equality1}, we have
$$
\Res_{E_i} (D' _{h^\star} u) = 
\Res_{E_i} (D' _{h^\star} (\frac{dz_1}{z_1} \wedge f)) = \Res_{E_i} ( - \frac{dz_1}{z_1} \wedge \partial f  + \partial \varphi \wedge \frac{dz_1}{z_1} \wedge f)=- (\partial f +\partial \varphi \wedge f )|_{E_i}.
$$
Then we obtain \eqref{equality1}.

Note that $\Res_{E_i} (u) \in H^0 (E_i , \Omega_{E_i} ^{p-1} (\log (E-E_i)) \otimes L^\star)$. By induction on dimension, we know that $\Res_{E_i} (u)$ is $D' _{h^\star}$-closed on $E_i$. Then \eqref{equality1} implies that $\Res_{E_i} (D' _{h^\star} u) =0 $. Therefore the form $D' _{h^\star} u$  is a smooth form on the total space $X$.

\medskip

{\em Step 2:}  Let $N\in\mathbb N^\star$ and let $\Xi_N (x)$ be a smooth function which equals to $1$ on $[0, N]$, equals to $0$ on $[N+1 ,\infty]$ and $0 \leq \Xi'_N (x)\leq 1$. Let $s_E$ be the canonical section of $E$. We consider the integration
\begin{equation}\label{twoterms}
 \int_X \Xi_N (\log (- \log |s_E|)) \{D' _{h^\star} u, D' _{h^\star} u\} \wedge \omega_X ^{n-p-1} .
\end{equation}
Here $|s_E|$ denotes the norm of $s_E$ with respect to a fixed smooth metric on $E$.

By integration by parts, \eqref{twoterms} equals to
 $$= \int_X \{ D' _{h^\star} (\Xi_N (\log (- \log |s_E|)) u) , D' _{h^\star}u\} \wedge \omega_X ^{n-p-1} - \int_X \{\partial (\Xi_N (\log (- \log |s_E|))) \wedge u , D'_{h^\star} u \} \wedge \omega_X ^{n-p-1} $$
$$ = -\int_X  (-1)^p \Xi_N (\log (- \log |s_E|)) \{u, \dbar (D' _{h^\star} u)  \}\wedge \omega_X ^{n-p-1} - \int_X \{\partial (\Xi_N (\log (- \log |s_E|))) \wedge u , D'_{h^\star} u \} \wedge \omega_X ^{n-p-1}$$
\begin{equation}\label{twotermsest}
	=- \int_X i\Theta_h (L) \Xi_N \cdot  \{u, u\}\wedge \omega_X ^{n-p-1} - \int_X \{ \frac{\Xi' _N \cdot \partial \log |s_E| \wedge u }{\log |s_E|} , D'_{h^\star} u \} \wedge \omega_X ^{n-p-1} . 
\end{equation}

Since $i\Theta_h (L)\geq 0$, the first term of \eqref{twotermsest} is semi-negative. 
For the second term of \eqref{twotermsest}, by step 1, we know that  $D'_{h^\star} u$ is smooth on $X$.  Together with $\frac{d s_{E_i}}{ s_{E_i} \log |s_{E_i}|} \wedge \frac{d s_{E_i}}{ s_{E_i}} =0$, we know that the second term of \eqref{twotermsest} is controlled by 
$$\int_{N \leq \log (- \log |s_E|) \leq N+1} \frac{1}{\prod_i |s_{E_i}|} \omega_X ^{n} ,$$ 
which converges to zero when $N \rightarrow 0$.

As a consequence, when $N\rightarrow +\infty$, the upper limit of \eqref{twotermsest} will not be strictly positive. Since  \eqref{twoterms} is always positive, we obtain
\begin{equation}
\lim_{N\rightarrow +\infty}	\int_X \Xi_N (\log (- \log |s_E|)) \{D' _{h^\star} u, D' _{h^\star} u\} \wedge \omega_X ^{n-p} =0 .
\end{equation}
Therefore $D'_{h^\star} u =0$ on $X$.
\end{proof}

\begin{remark}
By a standard argument, it is easy to generalize the above proposition to the case when the metric $(L,h)$ is of analytic singularity. However, it is unclear whether we can generalize it to the case of arbitrary singularity  cf. Question \ref{lccase}.
\end{remark}

In the rest of the section, we will confirm Question \ref{lccase} in two special cases.

\begin{proposition} \label{l2version}
	Let $(X,\omega_X)$ be a compact K\"ahler manifold, and let $E=\sum_{i=1}^r E_i$ be a snc divisor. Let $(L,h)$ be a holomorphic line bundle on $X$ where $h$ is a possibly singular metric such that  $i\Theta_h (L)\geq 0$ on $X$ in the sense of currents. Let $(L^\star, h^\star)$ be the dual metric.  
	Let $u\in H^0 (X, \Omega_X ^p (\log E) \otimes L^\star)$. We assume that $\Res_{E_i} (u) \neq 0$ for every $1 \leq i\leq k$ and $\Res_{E_i} (u)=0$ for every $k<i \leq r$. 
	
We write $h=e^{-\varphi} \cdot h_0$, where $\varphi$ is a quasi-psh function on $X$ and $h_0$ is a smooth metric on $L$. If the weight function $\varphi$ satisfies:
\begin{equation}\label{upper}
	\varphi \leq - 2 \sum_{i=1}^k \ln (-\ln |s_{E_i}|) + C ,
	\end{equation}
where $s_{E_i}$ is the canonical section of $E_i$, then $D' _{h^\star} u =0$ and $i\Theta_h (L) \wedge u \wedge \overline{u} =0$ on $X\setminus E$ , where $D' _{h^\star}$ is the connection with respect to $h^\star$.
	\end{proposition}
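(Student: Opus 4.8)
The plan is to run the two–step scheme of the proof of Proposition \ref{proposition-smooth-metric}, but on the singular metric $h=e^{-\vp}h_0$, inserting a regularisation of the weight $\vp$ to make the integration by parts legitimate and using the hypothesis \eqref{upper} only to guarantee convergence of the integrals. The first thing I would record is that \eqref{upper} is exactly the $L^2$–integrability condition for $u$: near a component $E_i$ with $1\le i\le k$ we may write $u=\frac{dz_1}{z_1}\wedge f+g$ with $f=\Res_{E_i}(u)\neq 0$, so that $|u|^2_{h_0^\star}\sim |z_1|^{-2}$, while \eqref{upper} yields $e^{\vp}\lesssim\prod_{i=1}^k(-\ln|s_{E_i}|)^{-2}$; since $\int \frac{dt}{t(\ln t)^2}<\infty$ the two contributions combine to $\int_X|u|^2_{h^\star}\,\omx^n<\infty$. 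This integrability is the substitute, in the $L^2$ estimates below, for the smoothness of the metric used in Proposition \ref{proposition-smooth-metric}.

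Next I would reproduce the residue computation of Step~1. Identity \eqref{equality1}, $\Res_{E_i}(D'_{h^\star}u)=-D'_{h^\star}\Res_{E_i}(u)$, is purely local and survives for a singular weight once $\vp$ is replaced by a smooth approximation near $E_i$; combined with the vanishing of $D'_{h^\star}\Res_{E_i}(u)$, obtained by induction on $\dim X$ applied to $\Res_{E_i}(u)\in H^0(E_i,\Omega_{E_i}^{p-1}(\log(E-E_i))\otimes L^\star)$, it shows that the residues of $D'_{h^\star}u$ vanish, so that $D'_{h^\star}u$ carries no worse singularities than those already cut off. The delicate point here is that the restriction of \eqref{upper} to $E_i$ must again be of the admissible form, which one must check component by component.

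The heart of the matter is Step~2. I would take a decreasing Demailly regularisation $\vp_\nu\downarrow\vp$ with $i\Theta_{h_\nu}(L)\ge-\ep_\nu\,\omx$ and $\ep_\nu\to 0$, arranged so that the weights still obey \eqref{upper} with a constant independent of $\nu$ near $E$. For each $\nu$ one integrates against the cut–off $\Xi_N(\log(-\log|s_E|))$ exactly as in \eqref{twoterms}--\eqref{twotermsest}. The curvature term is now bounded by $\ep_\nu\int_X\Xi_N|u|^2_{h_\nu^\star}\,\omx^n\le\ep_\nu\|u\|^2_{L^2(h^\star)}$, which tends to $0$ thanks to the uniform $L^2$ bound; the error term carrying $\Xi'_N$ is estimated by Cauchy--Schwarz, the factor $\frac{\d\log|s_{E_i}|}{\log|s_{E_i}|}\wedge u$ losing its pole because $\frac{dz_1}{z_1}\wedge\frac{dz_1}{z_1}=0$, so that the shell integral over $N\le\log(-\log|s_E|)\le N+1$ behaves like $\int_{e^N}^{e^{N+1}}t^{-2}\,dt\to 0$. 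Letting first $N\to\infty$ and then $\nu\to\infty$ should give $\int_{X\sm E}|D'_{h^\star}u|^2_{h^\star}\,\omx^n=0$, hence $D'_{h^\star}u=0$ on $X\sm E$ and $i\Theta_h(L)\wedge u\wedge\ol u=0$.

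I expect the main obstacle to be precisely the compatibility of the regularisation with \eqref{upper}. Demailly's regularisation decreases to $\vp$ from above, so a priori $\|u\|^2_{h_\nu^\star}\ge\|u\|^2_{h^\star}$ and the $L^2$ norm could even be infinite for every finite $\nu$; the whole limiting argument then collapses unless one produces approximations whose weights retain the log–log behaviour near $E$, giving $L^2$ bounds uniform in $\nu$. Establishing this uniformity — rather than the formal integration by parts, which is routine once all integrals converge — is where the real work lies, and it is exactly the step in which the hypothesis \eqref{upper} is genuinely used.
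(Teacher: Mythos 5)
Your overall scaffolding (cut-off, integration by parts, Cauchy--Schwarz, shell estimate) is the right one, but two of your steps do not survive scrutiny, and they are exactly the points where the paper's proof of Proposition \ref{l2version} does something different. First, your analogue of Step 1 of Proposition \ref{proposition-smooth-metric} --- proving via the residue identity \eqref{equality1} and induction on dimension that $D'_{h^\star}u$ has vanishing residues, hence no extra singularity along $E$ --- is not available for a singular weight. The hypothesis \eqref{upper} forces $\varphi\equiv-\infty$ on every $E_i$ with $i\le k$, so $h$ does not restrict to a metric on $E_i$, the inductive statement ``$D'_{h^\star}\Res_{E_i}(u)=0$ on $E_i$'' is not even well posed, and replacing $\varphi$ by a smooth approximation near $E_i$ proves \eqref{equality1} only for the approximation, not for the singular connection you care about. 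The paper sidesteps this entirely: it never needs any control of $D'_{h^\star}u$ near $E$. By taking the \emph{squared} cut-off $\Xi_N^2(\log\log(-\log|s_E|))$ and, after integrating by parts, applying Cauchy--Schwarz to absorb the term $\Xi_N\, D'_{h^\star}u$ into the left-hand side, it arrives at \eqref{bound1}, whose right-hand side involves only $u$ (through $\partial\log|s_E|\wedge u$) and not $D'_{h^\star}u$. That absorption is the replacement for your Step 1, and it makes the conclusion on $X\setminus E$ (which is all the proposition claims) accessible without knowing anything about $D'_{h^\star}u$ across $E$.

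Second, the regularisation scheme at the heart of your Step 2 contains an inequality in the wrong direction, as you yourself observe at the end: since $\varphi_\nu\ge\varphi$, one has $|u|^2_{h_\nu^\star}=e^{\varphi_\nu}|u|^2_{h_0^\star}\ge|u|^2_{h^\star}$, so your curvature term $\varepsilon_\nu\int_X\Xi_N|u|^2_{h_\nu^\star}\,\omega_X^n$ dominates $\varepsilon_\nu\|u\|^2_{L^2(h^\star)}$ rather than being dominated by it; moreover no smooth decreasing approximation can satisfy \eqref{upper} with a uniform constant, because a smooth function cannot lie below $-2\ln(-\ln|s_{E_i}|)+C$, which tends to $-\infty$ along $E_i$. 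So the uniformity you correctly identify as essential to your argument is unobtainable, and the proposal has no way to close. The paper's proof shows the whole detour is unnecessary: because $i\Theta_h(L)\ge0$ as a current and the support of the cut-off is a compact subset of $X\setminus E$ on which $D'_{h^\star}u$ is $L^2$, the integration by parts of \cite{Dem02} is valid directly for the singular metric, and the curvature term $-\int_X i\Theta_h(L)\,\Xi_N^2\,\{u,u\}\wedge\omega_X^{n-2}$ in \eqref{twotermsestsing1} is simply discarded because it has a favourable sign. The hypothesis \eqref{upper} is then used exactly once, in Step 2, to dominate $\{\partial\log|s_E|\wedge u,\partial\log|s_E|\wedge u\}\wedge\omega_X^{n-2}$ by the fixed integrable density in \eqref{maintron1} (note this must also handle the components $E_i$ with $i>k$, where $u$ has no pole but $\partial\log|s_{E_i}|$ does --- your $L^2$ heuristic only addresses $i\le k$); dominated convergence on the shrinking support of $\Xi_N'$ then kills the right-hand side of \eqref{bound1}. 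In short: no regularisation and no smoothness of $D'_{h^\star}u$ are needed --- only the sign of the curvature current and the integrability supplied by \eqref{upper}.
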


\begin{remark}
Note that if the Lelong number of $\varphi$ along $E_i$ is strictly positive for every $i\leq k$, then
$\varphi$ satisfies the condition \eqref{upper}.
\end{remark}

\begin{proof}
The proof is divided into two steps.

	{\em Step 1:}  Let $N\in\mathbb N^\star$ and let $\Xi_N (x)$ be a smooth function which equals to $1$ on $[0, N]$, equals to $0$ on $[N+1 ,\infty]$ and $0 \leq \Xi'_N (x)\leq 1$.  We consider the integration
\begin{equation}\label{twotermssing1}
	\int_X \Xi^2 _N (\log (\log (- \log |s_E|))) \{D' _{h^\star} u, D' _{h^\star} u\} \wedge \omega_X ^{n-2} .
\end{equation}
Since $D' _{h^\star} u$ is $L^2$ in the support of $ \Xi_N (\log (\log (- \log |s_E|))) $, we can still do the integration by parts as in \cite{Dem02}. In particular, \eqref{twotermssing1} equals to
$$= \int_X \{ D' _{h^\star} (\Xi^2 _N (\log (\log (- \log |s_E|))) u) , D' _{h^\star}u\} \wedge \omega_X ^{n-2} - \int_X \{\partial (\Xi^2 _N (\log (\log (- \log |s_E|))) \wedge u , D'_{h^\star} u \} \wedge \omega_X ^{n-2} $$
\begin{equation}\label{twotermsestsing1}
	=- \int_X i\Theta_h (L) \Xi^2 _N (\log (- \log |s_E|))  \{u, u\}\wedge \omega_X ^{n-2} - \int_X \{ \frac{2 \cdot \Xi' _N \cdot \partial \log |s_E| \wedge u }{\log(-\log |s_E|) \log |s_E|}  , \Xi_N \cdot D'_{h^\star} u \} \wedge \omega_X ^{n-2} . 
\end{equation}

Since $i\Theta_h (L)\geq 0$, the first term of \eqref{twotermsestsing1} is semi-negative. 
For the second term of \eqref{twotermsestsing1}, by using Cauchy inequality, we get 
$$| \int_X \{ \frac{\Xi' _N \cdot  \partial \log |s_E| \wedge u}{\log(-\log |s_E|) \log |s_E|}  , \Xi_N \cdot D'_{h^\star} u \} \wedge \omega_X ^{n-2}| ^2 $$
$$\leq 	\int_X \Xi^2 _N \{D' _{h^\star} u, D' _{h^\star} u\} \wedge \omega_X ^{n-2} \cdot 
\int_X \{ \frac{\Xi' _N \cdot   \partial \log |s_E| \wedge u}{\log(-\log |s_E|) \log |s_E|}  , \frac{\Xi' _N \cdot   \partial \log |s_E| \wedge u }{\log(-\log |s_E|) \log |s_E|} \} \wedge \omega_X ^{n-2} .$$
As a consequence, we obtain 
\begin{equation}\label{bound1}
	\int_X \Xi^2 _N \cdot  \{D' _{h^\star} u, D' _{h^\star} u\} \wedge \omega_X ^{n-2} \leq 	\int_X \{ \frac{\Xi' _N \cdot   \partial \log |s_E| \wedge u}{\log(-\log |s_E|) \log |s_E|}  , \frac{\Xi' _N \cdot   \partial \log |s_E| \wedge u}{\log(-\log |s_E|) \log |s_E|} \} \wedge \omega_X ^{n-2}
\end{equation}

\bigskip

{\em Step 2:}  In this step, we would like to show the RHS of \eqref{bound1} tends to zero when $N \rightarrow +\infty$.

Since $\frac{d s_{E_i}}{ s_{E_i}} \wedge \frac{d s_{E_i}}{ s_{E_i}} =0$, the assumption \eqref{upper} implies that 
$\{\partial \log |s_E| \wedge u , \partial \log |s_E| \wedge u \} \wedge \omega_X ^{n-2}$ is upper bounded  by 
$$ C'   \frac{\omega_X ^n }{\prod_{i=1}^k |s_{E_i}|^2 \log^2 |s_{E_i}|}\cdot  (\sum_{i =k+1}^r \frac{1}{|s_{E_i}|^2})$$
for some constant $C'$. 
Then the RHS of \eqref{bound1} is controlled by 
\begin{equation}\label{maintron1}
	C' \sum_{i=k+1}^r\int_X  \frac{(\Xi' _N)^2  \omega_X ^n}{\prod_{i=1}^k |s_{E_i}|^2 \log^2 |s_{E_i}|} \cdot  \frac{1}{|s_{E_i}|^2 \log^2 |s_{E_i}|}.
\end{equation}
which converges to zero when $N \rightarrow 0$.  As a consequence, the RHS of \eqref{bound1} tends to zero when $N \rightarrow +\infty$. Therefore $D' _{h^\star} u =0$ on $X\setminus E$.
	\end{proof}

\noindent By using the argument in Proposition \ref{l2version}, we can give an alternative proof of \cite[Thm 5]{Tou16}:

\begin{proposition} \label{pless2}
	Let $X$ be a compact K\"ahler manifold, and let $E=\sum E_i$ be a snc divisor. Let $(L,h)$ be a holomorphic line bundle on $X$ where $h$ is a possible singular metric such that $i\Theta_h (L)\geq 0$.  Let $u\in H^0 (X, \Omega_X ^1 (\log E) \otimes L^\star)$ and $(L^\star, h^\star)$ be the dual metric on $(L,h)$. 
	Then $D' _{h^\star} u =0$ and $i\Theta_h (L) \wedge u \wedge \overline{u} =0$ on $X\setminus E$.
\end{proposition}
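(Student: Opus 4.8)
The plan is to run the same cut-off plus integration-by-parts scheme as in the proof of Proposition \ref{l2version}, and to explain why for $p=1$ the error term converges to zero \emph{without} the growth assumption \eqref{upper}. Write $h=e^{-\varphi}h_0$ with $\varphi$ quasi-psh and $h_0$ smooth; since $\varphi$ is bounded from above, the dual metric $h^\star$ is bounded, so every pointwise pairing $\{\cdot,\cdot\}$ formed with $h^\star$ is controlled by the corresponding smooth pairing. I would then copy Step 1 of Proposition \ref{l2version} verbatim: fix the double-logarithmic cut-off $\Xi_N(\log(\log(-\log|s_E|)))$, form $\int_X \Xi_N^2\,\{D'_{h^\star}u,D'_{h^\star}u\}\wedge\omega_X^{n-2}$, integrate by parts (legitimate because $D'_{h^\star}u$ is $L^2$ on the support of $\Xi_N$, which stays away from $E$), and apply Cauchy--Schwarz to reach \eqref{bound1}. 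Note that Step 1 never uses \eqref{upper}; that assumption only entered in Step 2. Thus the whole statement reduces to showing that the right-hand side of \eqref{bound1}, namely
$$
\int_X \Big\{\tfrac{\Xi'_N\,\partial\log|s_E|\wedge u}{\log(-\log|s_E|)\,\log|s_E|},\ \tfrac{\Xi'_N\,\partial\log|s_E|\wedge u}{\log(-\log|s_E|)\,\log|s_E|}\Big\}\wedge\omega_X^{n-2}\longrightarrow 0 .
$$

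The decisive point specific to $p=1$ is the pole structure of $\partial\log|s_E|\wedge u$. Near a stratum $\bigcap_{i\in I}E_i$ one may write $u=\sum_{i\in I}a_i\,\tfrac{dz_i}{z_i}+g$, where, because $u$ is a \emph{one}-form, each residue $a_i=\Res_{E_i}(u)$ is an honest holomorphic section (a zero-form) and $g$ is a holomorphic one-form; crucially, none of these acquires further poles along the other $E_j$. Using $\tfrac{dz_i}{z_i}\wedge\tfrac{dz_i}{z_i}=0$, the wedge $\partial\log|s_E|\wedge u$ is, up to bounded factors, a combination of the terms $\tfrac{dz_i}{z_i}\wedge\tfrac{dz_j}{z_j}$ with $i\neq j$ and $\tfrac{dz_i}{z_i}\wedge g$. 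Hence $|\partial\log|s_E|\wedge u|^2_{h^\star}$ is dominated, up to a constant, by $\sum_{i\neq j}\tfrac{1}{|s_{E_i}|^2|s_{E_j}|^2}$, i.e. it involves at most \emph{two} components at a time. This is precisely what fails for $p\geq 2$, where the residues are themselves logarithmic forms and poles along three or more components force one to impose \eqref{upper}.

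It then remains to check that each two-component contribution integrates to zero, which replaces Step 2 of Proposition \ref{l2version}. Setting $a=-\log|s_{E_i}|$ and $b=-\log|s_{E_j}|$, the relevant integral takes the shape $\int\!\!\int_{a,b>0}\tfrac{(\Xi'_N)^2}{(a+b)^2(\log(a+b))^2}\,da\,db$; integrating over the level sets $a+b=S$ (of length $\sim S$) turns this into $\int \tfrac{(\Xi'_N)^2}{S(\log S)^2}\,dS$, and after $\sigma=\log S$ into $\int \tfrac{(\Xi'_N)^2}{\sigma^2}\,d\sigma$ over the shell $\sigma\in[e^N,e^{N+1}]$, which is $\leq e^{-N}-e^{-(N+1)}\to 0$. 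This is the only place where the argument genuinely differs from Proposition \ref{l2version}, and it is where the double-logarithmic cut-off and the two-component bound cooperate. Letting $N\to\infty$ then forces both $\int_X i\Theta_h(L)\wedge\{u,u\}\wedge\omega_X^{n-2}\le 0$ and $\int_X\{D'_{h^\star}u,D'_{h^\star}u\}\wedge\omega_X^{n-2}\le 0$; since $i\Theta_h(L)\ge 0$ and both integrands are non-negative, each vanishes on $X\setminus E$, yielding $D'_{h^\star}u=0$ and $i\Theta_h(L)\wedge u\wedge\overline u=0$ there.

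I expect the main obstacle to be careful bookkeeping rather than anything conceptual. One must verify, exactly as in Proposition \ref{l2version}, that $D'_{h^\star}u$ is square-integrable on the support of $\Xi_N$ so that the integration by parts is valid; and one must make the ``at most two components'' estimate uniform over all strata of $E$, including the mixed terms $\tfrac{dz_i}{z_i}\wedge g$ involving the smooth part. Once the pole structure of the logarithmic one-form is pinned down, the convergence computation is robust and the conclusion follows as in Propositions \ref{proposition-smooth-metric} and \ref{l2version}.
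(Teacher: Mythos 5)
Your proposal is correct and follows essentially the same route as the paper's proof: reuse Step~1 of Proposition~\ref{l2version} (which indeed never uses \eqref{upper}) to reach \eqref{bound}, observe that for a logarithmic $1$-form the residues are holomorphic functions so the error term has poles along at most two components, bounded by $\sum_{i\neq j}|s_{E_i}s_{E_j}|^{-2}$, and then show the resulting two-dimensional tail integral vanishes as $N\to\infty$. The only cosmetic difference is that you carry out the change of variables and shell estimate explicitly, where the paper cites the finiteness of the model integral $\int \frac{dr_1\,dr_2}{\log^2(-\log|r_1r_2|)\log^2|r_1r_2|\,r_1r_2}$ over the relevant region; your explicit computation is, if anything, slightly more careful.
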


\begin{proof}
	
	We follow the notations in Proposition \ref{l2version}. By the step 1 of Proposition \eqref{bound1}, we know that 
	\begin{equation}\label{bound}
	\int_X \Xi^2 _N \cdot  \{D' _{h^\star} u, D' _{h^\star} u\} \wedge \omega_X ^{n-2} \leq 	\int_X \{ \frac{\Xi' _N \cdot   \partial \log |s_E| \wedge u}{\log(-\log |s_E|) \log |s_E|}  , \frac{\Xi' _N \cdot   \partial \log |s_E| \wedge u}{\log(-\log |s_E|) \log |s_E|} \} \wedge \omega_X ^{n-2}
\end{equation}
In order to prove the proposition, it is sufficient to show the RHS of \eqref{bound} tends to zero when $N \rightarrow +\infty$. 

Since $\frac{d s_{E_i}}{ s_{E_i}} \wedge \frac{d s_{E_i}}{ s_{E_i}} =0$ and $u$ is a $1$-form, 
$\{\partial \log |s_E| \wedge u , \partial \log |s_E| \wedge u \} \wedge \omega_X ^{n-2}$ is upper bounded  by 
$$ C \cdot \sum_{i \neq j} \frac{\omega_X ^n }{ |s_{E_i} s_{E_j}|^2 } .$$
Then the RHS \eqref{bound} is controlled by 
\begin{equation}\label{maintron}C \sum_{i\neq j}\int_X  \frac{(\Xi' _N)^2  \omega_X ^n}{\log^2 (-\log |s_E|) \log^2 |s_E| \cdot  |s_{E_i} s_{E_j}|^2} .
	\end{equation}
Note that the integral
$$\int_{ 0\leq r_1 ,r_2 \leq 1}  \frac{d r_1 \wedge dr_2}{\log^2 (-\log |r_1 r_2|) \log^2 |r_1 r_2| \cdot r_1 r_2} < +\infty .$$
Therefore \eqref{maintron} converges to zero when $N \rightarrow 0$.  As a consequence, the RHS of \eqref{bound} tends to zero when $N \rightarrow +\infty$. Therefore $D' _{h^\star} u =0$ on $X\setminus E$.
\end{proof}

\section{Lifting subsheaves to the resolution}

Let $Y$ be a normal complex space with klt singularities, and let
$\holom{\nu}{Y'}{Y}$ be a proper surjective morphism from a normal complex space $Y'$.  Since klt singularities are rational \cite[Thm.5.22]{KM98}, by \cite[Thm.1.10]{KS21} there exists for every $p \in \N$ a cotangent map
\begin{equation} \label{equation-pull-back}
	d \nu : \nu^\star \Omega_Y^{[p]} \rightarrow \Omega_{Y'}^{[p]}
\end{equation}
If $Y$ has lc singularities we can still combine the proof of \cite[Thm.4.3]{GKKP11}
with \cite[Thm.1.5]{KS21} to obtain\footnote{Note that \cite[Thm.1.10]{KS21} holds for any morphism, while we only need the simpler case where the morphism is surjective.} that 
there exists for every $p \in \N$ a cotangent map
\begin{equation} \label{equation-pull-back-lc}
	d \nu : \nu^\star \Omega_Y^{[p]} \rightarrow \Omega_{Y'}^{[p]}(\log \Delta)
\end{equation}
where $\Delta \subset Y'$ is the largest reduced Weil divisor contained
in $\nu^{-1}(\mbox{non-klt locus})$.

The following statement is well-known to experts and essentially a rewriting of the proof of
\cite[Thm.7.2]{GKKP11}. We include it for the convenience of the reader:

\begin{lemma}\label{lemma-pullback}
	Let $Y$ be a normal complex space with lc singularities, and let $\mathcal A \subset \Omega_Y^{[p]}$ be a reflexive subsheaf of rank one that is $\Q$-Cartier, i.e. there exists a $m \in \N$ such that $\mathcal A^{[m]}$ is locally free.
	
	Let $\holom{\pi}{X}{Y}$ be a log resolution, and let $E$ be the exceptional divisor. Let $\mathcal C \subset \Omega^p_{X}(\log E)$ be the saturation of the image of the morphism
	$$
	\pi^\star \mathcal A \rightarrow \pi^\star \Omega_Y^{[p]} \stackrel{d \pi}{\rightarrow} \Omega^p_{X}(\log E).
	$$
	Then there exists a non-zero morphism $\pi^\star \mathcal A^{[m]} \rightarrow \mathcal C^{\otimes m}$.
\end{lemma}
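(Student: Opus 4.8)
The goal is to construct a non-zero morphism $\pi^\star \mathcal A^{[m]} \rightarrow \mathcal C^{\otimes m}$, and the natural strategy is to reduce everything to rank-one reflexive sheaves where $\Q$-Cartier hypotheses become workable. First I would localize the problem: since $\mathcal A^{[m]}$ is locally free and $\mathcal C$ is the saturation of the image of $\pi^\star \mathcal A$ in $\Omega^p_X(\log E)$, the rational map $d\pi$ already furnishes a non-zero map $\pi^\star \mathcal A \to \mathcal C$ of rank-one sheaves (indeed $\mathcal C$ has rank one because $\mathcal A \subset \Omega_Y^{[p]}$ has rank one and saturations preserve rank). Tensoring $m$ times and saturating gives a candidate morphism $\pi^\star \mathcal A^{\otimes m} \to \mathcal C^{\otimes m}$. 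The issue is that $\pi^\star \mathcal A^{\otimes m}$ need not equal $\pi^\star \mathcal A^{[m]} = \pi^\star(\mathcal A^{\otimes m})^{\star\star}$; the reflexive hull only differs from the ordinary power in codimension $\geq 2$, so the two agree away from a small subset, and the content of the lemma is to extend the comparison morphism across that subset.

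The key step is therefore a codimension argument. Let $Z \subset X$ be the locus where $d\pi$ degenerates together with the image of the non-locally-free locus of $\mathcal A$; by the $\Q$-Cartier hypothesis $\mathcal A^{[m]}$ is locally free, so the discrepancy between $\pi^\star \mathcal A^{\otimes m}$ and $\pi^\star \mathcal A^{[m]}$ is supported on a set $W := \pi^{-1}(\Sigma)$, where $\Sigma \subset Y$ is the non-free locus of $\mathcal A$, which has codimension $\geq 2$ in $Y$. Over the open set $U := X \setminus W$ the natural map $\pi^\star \mathcal A^{\otimes m}|_U \to \mathcal C^{\otimes m}|_U$ coincides with a map out of $\pi^\star \mathcal A^{[m]}|_U$, and I would compose with the inclusion coming from the universal property of saturation to land in $\mathcal C^{\otimes m}$. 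The remaining task is to extend this morphism over all of $X$: since $\mathcal C^{\otimes m}$ is torsion-free and $\mathcal C$ is saturated inside the locally free sheaf $\Omega^p_X(\log E)$, sections extend across loci of codimension $\geq 2$ provided we control the codimension of $W$ in $X$. Here $\pi$ is birational, so $\codim_X W \geq 2$ whenever $\codim_Y \Sigma \geq 2$, except possibly along divisorial components of the exceptional locus lying over $\Sigma$; this is exactly where the logarithmic poles along $E$ are needed, mirroring \cite[Thm.7.2]{GKKP11}.

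The main obstacle I anticipate is precisely this last extension across the exceptional divisor $E$: a priori the map $\pi^\star \mathcal A^{[m]} \to \mathcal C^{\otimes m}$ might acquire poles along components of $E$ contracted to $\Sigma$, and one must verify these are at worst logarithmic and hence absorbed by working inside $\Omega^p_X(\log E)$ rather than $\Omega^p_X$. I would handle this by the reflexivity of $\Omega^{[p]}_Y$ together with the cotangent map \eqref{equation-pull-back}, which factors $d\pi$ through the logarithmic cotangent sheaf with controlled pole order; the $\Q$-Cartier condition then bounds the vanishing order of a local generator of $\mathcal A^{[m]}$, and a direct local computation at a generic point of each exceptional divisor shows the extended map has no worse than the allowed logarithmic singularity. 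The morphism so constructed is non-zero because it restricts to the non-zero map $d\pi$ on the dense open set where $\pi$ is an isomorphism and $\mathcal A$ is free, completing the proof.
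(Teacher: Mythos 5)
You reduce the lemma correctly: both $\pi^\star \mathcal A^{[m]}$ and $\mathcal C^{\otimes m}$ are invertible, the natural comparison morphism exists away from the preimage of the non-locally-free locus $\Sigma$ of $\mathcal A$, it extends across the codimension-two part of $\pi^{-1}(\Sigma)$ for free, and the whole content of the lemma is that it acquires no poles along the exceptional divisors $E_i$ lying over $\Sigma$ (equivalently, $a_i\geq 0$ when one writes $\mathcal C^{\otimes m}\simeq\pi^\star\mathcal A^{[m]}\otimes\sO_X(\sum a_iE_i)$). But your mechanism for this last step --- ``the $\Q$-Cartier condition bounds the vanishing order of a local generator of $\mathcal A^{[m]}$, and a direct local computation at a generic point of each exceptional divisor'' --- is precisely the gap. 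The cotangent maps \eqref{equation-pull-back} and \eqref{equation-pull-back-lc} (the latter is the relevant one here, since $Y$ is only assumed lc) exist for reflexive $p$-forms, i.e.\ at level $m=1$; they say nothing about sections of $\mathcal A^{[m]}=(\mathcal A^{\otimes m})^{\star\star}$. Exactly along $\Sigma$, where the extension problem lives, $\mathcal A$ is \emph{not} locally free, so a local generator $\sigma$ of $\mathcal A^{[m]}$ is a reflexive pluri-form that is not a tensor power (or combination of products) of local sections of $\mathcal A$; hence no pole bound for $\pi^\star\sigma$ follows from the $m=1$ theory, and local freeness of $\mathcal A^{[m]}$ by itself carries no information about pull-backs. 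This is not a technicality: as recalled in the introduction, the saturation of $\pi^\star\mathcal A$ in $\Omega^p_X$ (without log poles) can fail to be pseudoeffective \cite{GKP12, Ou14}, so any proof must invoke an input that genuinely distinguishes $\Omega^p_X(\log E)$ from $\Omega^p_X$ at level $m$; your sketch contains no such input and, as written, would ``prove'' the false non-logarithmic statement as well.

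The missing idea is the index-one cover. The paper works locally on $Y$ (the claim $a_i\geq 0$ is local), takes the quasi-\'etale cover $\holom{\gamma}{Z}{Y}$ on which $\mathcal B:=\gamma^{[\star]}\mathcal A$ is locally free, and notes that $Z$ is again lc by \cite[Prop.5.20(4)]{KM98}. On $Z$ a local generator of $\gamma^\star\mathcal A^{[m]}\simeq\mathcal B^{\otimes m}$ \emph{is} an $m$-th tensor power of a reflexive form, so the lc extension theorem \eqref{equation-pull-back-lc}, applied on $Z$, yields an inclusion $\pi_Z^\star\gamma^\star\mathcal A^{[m]}\rightarrow S^{[m]}\Omega^{[p]}_{\tilde Z}(\log E_Z)$ on the induced cover $\tilde\gamma : \tilde Z\rightarrow X$; this is the pole bound your local computation was supposed to produce. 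One then descends to $X$ using the isomorphism $\tilde\gamma^\star\Omega^p_X(\log E)\simeq\Omega^{[p]}_{\tilde Z}(\log E_Z)$, the fact that $\tilde\gamma^\star\mathcal C^{\otimes m}$ is saturated in $S^{[m]}\Omega^{[p]}_{\tilde Z}(\log E_Z)$ (so the pole-free pull-back of a generating section of $\mathcal A^{[m]}$ lands in it), and the standard fact that a divisor supported on the exceptional locus of a bimeromorphic morphism which is linearly equivalent to an effective divisor supported on that locus is itself effective. Without this covering construction, or an equivalent substitute, your outline cannot be completed.
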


\begin{remark*}
The morphism $\pi^\star \mathcal A^{[m]} \rightarrow \mathcal C^{\otimes m}$ is an isomorphism in the complement of the exceptional divisor $E$. Thus, up to multiplication by a holomorphic function that is a pull-back from $Y$, the morphism is unique. 
\end{remark*}

If $Y$ has klt singularities, we could use \eqref{equation-pull-back} and consider $\mathcal C' \subset  \Omega^p_{X}$, the saturation 
of the image of the morphism
	$$
	\pi^\star \mathcal A \rightarrow \pi^\star \Omega_Y^{[p]} \stackrel{d \pi}{\rightarrow} \Omega^p_{X},
	$$
but in general there will be no morphism $\pi^\star \mathcal A^{[m]} \rightarrow (\mathcal C')^{\otimes m}$.
However, in the course of the proof of Lemma \ref{lemma-pullback} we will prove the following remark that will be useful for the proof of Lemma \ref{lemma-residue}:

\begin{remark} \label{remark-klt-case}
If $Y$ is klt, let $\holom{\tilde \gamma}{\tilde Z}{X}$ be the cover induced by a (local) index-one cover
$\holom{\gamma}{Z}{Y}$ of $\mathcal A$ (cf. Diagram \eqref{commute}). Then  $\pi_Z^\star \gamma^\star \mathcal A^{[m]}$ is a subsheaf of $S^{[m]}  \Omega_{\tilde Z}^{[p]}$.  
\end{remark}

For the proof let us recall the notion of index one covers \cite[Defn.5.19]{KM98}: given a normal complex space $Y$
and a reflexive sheaf $\mathcal A$ such that some reflexive power $\mathcal A^{[m]}$ is  trivial, there exists a quasi-\'etale morphism $\holom{\gamma}{Z}{Y}$ from a normal complex space $Z$ such that the reflexive pull-back $\gamma^{[\star]} \mathcal A$ is locally free.

\begin{proof}[Proof of Lemma \ref{lemma-pullback}]
	The locally free sheaves coincide in the complement of the exceptional locus $E= \cup_i E_i$,
	so we can write $\mathcal C^{\otimes m} \simeq \pi^\star \mathcal A^{[m]} \otimes \sO_{X}(\sum a_i E_i)$ with  uniquely determined $a_i \in \Z$. We are done if we show that $a_i \geq 0$ for all $i$. This property can be checked locally on the base $Y$.
	
	Therefore we can replace $Y$ by a Stein neighborhood 
	such that there exists an index-one cover $\holom{\gamma}{Z}{Y}$, and let $\holom{\tilde \gamma}{\tilde Z}{X}$ be the induced finite map from the normalisation $\tilde Z$ of $X \times_Y Z$. 
	We denote by $\holom{\pi_Z}{\tilde Z}{Z}$ the bimeromorphic morphism induced by $\pi$ and  summarize the construction in a commutative diagram:
	\begin{equation} \label{commute}
	\xymatrix{
\tilde Z \ar[r]^{\tilde \gamma} \ar[d]_{\pi_Z} & X \ar[d]^\pi
\\
Z \ar[r]^\gamma & Y 
}
	\end{equation}
	
	The morphism $\holom{\gamma}{Z}{Y}$ is an index-one cover for $\mathcal A$,
	so $\gamma$ is \'etale in codimension one and $\gamma^{[\star]} \mathcal A =: \mathcal B$
	is locally free. In particular $Z$ still has lc singularities \cite[Prop.5.20(4)]{KM98}.
		Denote the exceptional locus of $\pi_Z$ by $E_Z$ and observe that $E_Z$ is equal to the support of $\tilde \gamma^\star E$. In particular $E_Z$ contains the preimage of the non-klt locus 
		of $Z$, so 
	\eqref{equation-pull-back-lc} gives a natural map
	$$
	d \pi_Z : \pi_Z^\star \Omega_Z^{[p]} \rightarrow \Omega_{\tilde Z}^{[p]}(\log E_Z)
	$$

	Since $\mathcal A \subset \Omega_Y^{[p]}$ and $\gamma$ is \'etale in codimension one
	we have an inclusion $\mathcal B \subset \Omega_Z^{[p]} \simeq \gamma^{[\star]} \Omega_Y^{[p]}$ and hence
	an induced map
	$$
	\pi_Z^\star \mathcal B \rightarrow \pi_Z^\star  \Omega_Z^{[p]} \rightarrow 
	\Omega_{\tilde Z}^{[p]}(\log E_Z). 
	$$
	Since $\mathcal B$ is locally free, this induces an inclusion
	
	\begin{equation} \label{equation-inclusion-B}
	\pi_Z^\star \mathcal B^{\otimes m} \simeq (\pi_Z^\star \mathcal B)^{\otimes m}
	\rightarrow S^{[m]} \Omega_{\tilde Z}^{[p]}(\log E_Z). 
	\end{equation}
	
	By assumption $A^{[m]}$ is locally free, so its (non-reflexive !) pull-back $\gamma^\star \mathcal A^{[m]}$ is still locally free. Thus $B^{\otimes m} \simeq \gamma^\star A^{[m]}$ since they are both reflexive and coincide in codimension one. Thus we have constructed a morphism
	$$
	\pi_Z^\star \gamma^\star A^{[m]} \rightarrow S^{[m]} \Omega_{\tilde Z}^{[p]}(\log E_Z). 
	$$

We interrupt the proof of the lemma for the {\em Proof of Remark \ref{remark-klt-case}.} 	

If $Y$ is klt, the index one cover $Z$ also has klt singularities \cite[Prop.5.20(4)]{KM98}.
Thus we can replace the pull-back with logarithmic poles \eqref{equation-pull-back-lc} by
the usual pull-back \eqref{equation-pull-back} to obtain
$$
	d \pi_Z : \pi_Z^\star \Omega_Z^{[p]} \rightarrow \Omega_{\tilde Z}^{[p]}  
	$$
As above the inclusion $\gamma^{[\star]} \mathcal A \simeq \mathcal B \subset \Omega_Z^{[p]} \simeq \gamma^{[\star]} \Omega_Y^{[p]}$ then gives the inclusion
$$
	\pi_Z^\star \gamma^\star \mathcal A^{[m]} \simeq  \pi_Z^\star \mathcal B^{\otimes m} \simeq (\pi_Z^\star \mathcal B)^{\otimes m}
	\rightarrow S^{[m]}  \Omega_{\tilde Z}^{[p]}. 
$$
This proves Remark \ref{remark-klt-case}, we now proceed with the proof of Lemma \ref{lemma-pullback}.

	Since $X$ is smooth, the saturated subsheaf $\mathcal C \subset \Omega^p_{X}(\log E)$ is locally free and a subbundle in codimension one. Thus 
	\begin{equation} \label{inclusion-m}
	\mathcal C^{\otimes m} \subset S^m \Omega^p_{X}(\log E)
	\end{equation}
	is locally free and a subbundle in codimension one, hence a saturated subsheaf. The finite morphism $\tilde \gamma$ is \'etale in the complement
	of $E$ and $\Omega^p_{X}(\log E)$ is locally free, so the tangent map gives an isomorphism
	\begin{equation} \label{iso-logarithmic}
	\tilde \gamma^\star \Omega^p_{X}(\log E) \simeq \Omega_{\tilde Z}^{[p]}(\log E_Z).
\end{equation}
and hence an isomorphism
	$$
	\tilde \gamma^\star S^m \Omega^p_{X}(\log E) \simeq S^{[m]} \Omega_{\tilde Z}^{[p]}(\log E_Z).
	$$
	Composing the inclusion \eqref{inclusion-m} with this isomorphism we obtain that 
	$$
	\tilde \gamma^\star \mathcal C^{\otimes m} \rightarrow  S^{[m]} \Omega_{\tilde Z}^{[p]}(\log E_Z)
	$$
	is a saturated subsheaf.
	
	Since $Y$ is Stein and $\mathcal A^{[m]}$ is invertible we can choose for every point $y \in Y$ a section $\sigma \in H^0(Y, \mathcal A^{[m]})$ that does not vanish in $y$.
	In particular $\sigma$ generates $\mathcal A^{[m]}$ as an $\sO_Y$-module near the point $y$.
	Thus it induces a section
	$$
	\pi_Z^\star \gamma^\star \sigma \in H^0(\tilde Z, S^{[m]} \Omega_{\tilde Z}^{[p]}(\log E_Z))
	$$
	that generates the image of $\pi_Z^\star \gamma^\star \mathcal A^{[m]}$.
	The pull-back $\pi^\star \sigma$ defines a meromorphic section of $\mathcal C^{\otimes m}$
	that has poles at most along $E$, thus 
	$\tilde \gamma^\star \pi^\star \sigma$ defines a meromorphic section of $\tilde \gamma^\star \mathcal C^{\otimes m}$ that has poles at most along $E_Z$.
	Since $\tilde \gamma^\star \mathcal C^{\otimes m}$ is saturated in $S^{[m]} \Omega_{\tilde Z}^{[p]}(\log E_Z)$ and 
	$$
	\pi_Z^\star \gamma^\star \sigma = \tilde \gamma^\star \pi^\star \sigma \in H^0(\tilde Z, S^{[m]} \Omega_{\tilde Z}^{[p]}(\log E_Z))
	$$
	has no poles, we see that 
	$$
	\tilde \gamma^\star \pi^\star \sigma \in H^0(\tilde Z, \tilde \gamma^\star \mathcal C^{\otimes m}).
	$$ 
	Thus the local generator of the subsheaf $\pi_Z^\star \gamma^\star \mathcal A^{[m]}$
	lies in $\tilde \gamma^\star \mathcal C^{\otimes m}$ and we have an inclusion
	$$
	\tilde \gamma^\star \pi^\star \mathcal A^{[m]}  \simeq \pi_Z^\star \gamma^\star \mathcal A^{[m]} \hookrightarrow \tilde \gamma^\star \mathcal C^{\otimes m}.
	$$
	Thus we see that
	$$
	\tilde \gamma^\star \sO_{X}(\sum a_i E_i) \simeq
	\tilde \gamma^\star (\mathcal C^{\otimes m} \otimes \pi^\star \mathcal A^{[-m]})
	$$
	is represented by an effective divisor with support in the exceptional locus of $\pi_Z$. Since  $\tilde \gamma^\star (\sum a_i E_i)$ is linearly equivalent to an effective, exceptional divisor and has also support in the exceptional locus of $\pi_Z$, it is  effective.
	Thus we have shown that $a_i \geq 0$ for all $i$.
\end{proof} 

As in immediate application we obtain a variant  
of \cite[Thm.7.2]{GKKP11},\cite[Cor.1.3]{Gra15} for pseudoeffective line bundles.

\begin{cor}\label{corollary-pseff}
	Let $Y$ be a normal compact complex space with lc singularities, and let $\mathcal A \subset \Omega_Y^{[p]}$ be a reflexive subsheaf of rank one that is $\Q$-Cartier, i.e. there exists a $m \in \N$ such that $\mathcal A^{[m]}$ is locally free. Let  $\mathcal C \subset \Omega^p_{X}(\log E)$ be the saturation of $\pi^\star
\sA$. If $\mathcal A^{[m]}$ is pseudoeffective, then $\mathcal C$ is pseudoeffective.
\end{cor}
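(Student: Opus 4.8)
The plan is to read this off directly from Lemma \ref{lemma-pullback}, or more precisely from the identity established in the course of its proof. Recall that, since $X$ is smooth, the saturated subsheaf $\mathcal C \subset \Omega^p_{X}(\log E)$ is rank one reflexive, hence a line bundle, and that in proving Lemma \ref{lemma-pullback} we obtained the identification
\[
\mathcal C^{\otimes m} \simeq \pi^\star \mathcal A^{[m]} \otimes \sO_{X}\Big(\sum_i a_i E_i\Big),
\]
where the $a_i \in \Z$ were shown to be non-negative. This is the only input I would use.

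First I would check that each factor on the right-hand side is pseudoeffective. As $a_i \geq 0$, the divisor $\sum_i a_i E_i$ is effective (supported on the exceptional locus of $\pi$), so $\sO_{X}(\sum_i a_i E_i)$ carries the singular metric attached to its canonical section and is therefore pseudoeffective. For the other factor, pseudoeffectivity of the locally free sheaf $\mathcal A^{[m]}$ on $Y$ means exactly that its pullback under the bimeromorphic surjective morphism $\pi$ is pseudoeffective on $X$ (the pullback of a singular metric with semi-positive curvature current is again such a metric). I would then invoke the standard fact that the tensor product of two pseudoeffective line bundles is pseudoeffective — the product of the two singular metrics has curvature current the sum of the two semi-positive currents, hence $\geq 0$ — to conclude that $\mathcal C^{\otimes m}$ is pseudoeffective.

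Finally I would descend from the $m$-th power to $\mathcal C$ itself: if $h$ is a singular metric on $\mathcal C^{\otimes m}$ with $i\Theta_h \geq 0$, then its $m$-th root $h^{1/m}$ is a singular metric on $\mathcal C$ with semi-positive curvature current, equivalently the pseudoeffective cone being a cone is stable under scaling by $1/m$ with $m>0$. There is no genuine obstacle in this argument; the only point deserving a word of care is the reconciliation of pseudoeffectivity on the possibly singular base $Y$ with pseudoeffectivity of the pullback on the resolution $X$, which is precisely the content of the pullback statement used above.
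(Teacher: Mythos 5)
Your proof is correct and follows essentially the same route as the paper: the paper also reduces to showing that $\mathcal C^{\otimes m}$ is pseudoeffective and then invokes the non-zero morphism $\pi^\star \mathcal A^{[m]} \rightarrow \mathcal C^{\otimes m}$ from Lemma \ref{lemma-pullback}, which is precisely your identification $\mathcal C^{\otimes m} \simeq \pi^\star \mathcal A^{[m]} \otimes \sO_{X}(\sum_i a_i E_i)$ with $a_i \geq 0$. Your write-up merely makes explicit the steps the paper leaves implicit (pseudoeffectivity of each factor, stability under tensor product, and passing from the $m$-th power back to $\mathcal C$), all of which are standard and correctly handled.
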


\begin{proof} Since pseudoeffectivity of a line bundle is invariant under taking
tensor powers, it is sufficient to show that $\mathcal C^{\otimes m}$ is pseudoeffective. Yet this follows from the non-zero morphism
$\pi^* \mathcal A^{[m]} \rightarrow \mathcal C^{\otimes m}$ constructed in 
Lemma \ref{lemma-pullback}.
\end{proof}

We need the following proposition.

\begin{lemma} \label{lemma-residue}
In the situation of Lemma \ref{lemma-pullback}, write
\begin{equation}\label{equality}
	\mathcal{C}^{\otimes m} = \pi^\star \mathcal{A}^{[m]} \otimes \sO_X(\sum a_i E_i),
	\end{equation}
where $a_i \geq 0$ and $E=\sum E_i$ is the exceptional locus.  

Assume that $Y$ has klt singularities, and let $E_i$ be an irreducible component of the exceptional locus. 
Let $\Res_{E_i} (\mathcal{C})$ be the residue of the image of $\mathcal{C}$ in $\Omega^p_{X}(\log E)$. If $\Res_{E_i} (\mathcal{C}) \neq 0$, then $a_i >0$. 
\end{lemma}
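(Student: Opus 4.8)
The plan is to argue by contradiction at a general point of $E_i$, using the index-one cover $\tilde\gamma\colon\tilde Z\to X$ from Diagram \eqref{commute} together with the key input of Remark \ref{remark-klt-case}: in the klt case $\pi_Z^\star\gamma^\star\mathcal{A}^{[m]}$ sits inside $S^{[m]}\Omega_{\tilde Z}^{[p]}$ \emph{without} logarithmic poles. Since $a_i\ge 0$ is already known from \eqref{equality}, it suffices to prove the contrapositive, so I assume $a_i=0$ and, contrary to the claim, $\Res_{E_i}(\mathcal{C})\neq 0$, working in a neighbourhood of a general point of $E_i$ where $X$ is smooth and $\mathcal{C}$ is invertible. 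The conclusion $\Res_{E_i}(\mathcal{C})>0$ is a generic statement along $E_i$, so this localisation is harmless.

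First I would fix local coordinates $z_1,\dots,z_n$ with $E_i=\{z_1=0\}$ and a local generator $c$ of $\mathcal{C}\subset\Omega^p_X(\log E)$. The hypothesis $\Res_{E_i}(\mathcal{C})\neq 0$ means $\Res_{E_i}(c)\neq 0$, i.e. $c=\tfrac{1}{z_1}\eta$ with $\eta$ holomorphic and $\eta|_{z_1=0}=dz_1\wedge\Res_{E_i}(c)\neq 0$ in the vector bundle $\Omega^p_X|_{E_i}$. Passing to the $m$-th symmetric power, $c^{\odot m}=\tfrac{1}{z_1^m}\eta^{\odot m}$ is a local generator of $\mathcal{C}^{\otimes m}\subset S^m\Omega^p_X(\log E)$, and $\eta^{\odot m}|_{z_1=0}=(dz_1\wedge\Res_{E_i}(c))^{\odot m}\neq 0$ since a nonzero vector has nonzero symmetric powers; thus $c^{\odot m}$ has a genuine pole of order exactly $m$ along $E_i$.

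Next I would transport this to $\tilde Z$. At a general point of $E_i$ the finite map $\tilde\gamma$ is, on a component $E_{Z,i}$ of $\tilde\gamma^{-1}(E_i)$, given by $z_1=w_1^e\cdot(\text{unit})$ in suitable coordinates $w_1,\dots,w_n$ on $\tilde Z$, which is smooth at the generic point of $E_{Z,i}$ because it is normal. The isomorphism \eqref{iso-logarithmic} identifies the image of $c$ with the honest pullback $\tilde\gamma^\star c$ of the log form, whose residue along $E_{Z,i}$ equals $e\cdot\tilde\gamma^\star(\Res_{E_i}(c))$ and is nonzero because $\tilde\gamma|_{E_{Z,i}}$ dominates $E_i$, so that $\tilde\gamma^\star$ is generically injective on forms. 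Hence $\tilde\gamma^\star(c^{\odot m})=(\tilde\gamma^\star c)^{\odot m}$ again has a pole of order exactly $m$ along $E_{Z,i}$, and in particular does \emph{not} lie in $S^{[m]}\Omega_{\tilde Z}^{[p]}$. On the other hand, from \eqref{equality} and $a_i=0$ we get $\tilde\gamma^\star\mathcal{C}^{\otimes m}=\pi_Z^\star\gamma^\star\mathcal{A}^{[m]}$ near the generic point of $E_{Z,i}$; choosing a local generator $\sigma$ of $\mathcal{A}^{[m]}$, the generator $\tilde\gamma^\star(c^{\odot m})$ then equals, up to a unit, the generator $\pi_Z^\star\gamma^\star\sigma$, which by Remark \ref{remark-klt-case} lies in $S^{[m]}\Omega_{\tilde Z}^{[p]}$ and has no pole along $E_{Z,i}$. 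This contradiction forces $a_i>0$.

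The main obstacle I anticipate is the bookkeeping of the residue under the ramified finite pullback $\tilde\gamma$ combined with the passage to symmetric powers: one must verify that the leading polar coefficient $\eta^{\odot m}|_{E_i}$, and its pullback to $E_{Z,i}$, is genuinely nonzero, which rests on the injectivity of $v\mapsto v^{\odot m}$ on a vector bundle and on the generic injectivity of $\tilde\gamma^\star$ on forms along $E_{Z,i}$. Everything else is local at a codimension-one point where $\tilde Z$ is smooth, so the computation can be carried out in honest coordinates; the only conceptual ingredient beyond Lemma \ref{lemma-pullback} is the log-pole-free inclusion of Remark \ref{remark-klt-case}, which is exactly the point where the klt (rather than merely lc) hypothesis is used.
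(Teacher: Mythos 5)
Your proposal is correct and follows essentially the same route as the paper: both localise at a general point of $E_i$, pull back to the index-one cover $\tilde Z$ via the diagram \eqref{commute}, and use Remark \ref{remark-klt-case} to play the pole-free inclusion $\pi_Z^\star\gamma^\star\mathcal{A}^{[m]}\subset S^{[m]}\Omega^{[p]}_{\tilde Z}$ against the genuine order-$m$ pole that a nonzero residue forces on the generator of $\mathcal{C}^{\otimes m}$. The only difference is bookkeeping: you track the pole order of the symmetric power of a single generator and argue by contradiction, whereas the paper expands $\pi^\star\sigma$ in a basis of $S^m\Omega^p_X(\log E)$ and compares coefficients to conclude directly that the transition function cutting out $a_iE_i$ vanishes along $E_i$.
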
	

\begin{proof}
The claim is local on $Y$, so we will use the construction from the proof of Lemma \ref{lemma-pullback} summarized in the commutative diagram \eqref{commute}.

Fix a prime divisor $\tilde E_i \subset \tilde Z$ that maps onto $E_i \subset X$,
and choose a general point $\tilde x \in \tilde E_i \cap \tilde Z_{\nons}$ such that $\tilde E_i$ (resp. $E_i$)
is smooth in $\tilde x$ (resp. smooth in $x:=\tilde \gamma(\tilde x)$). Since $\tilde x$ is general, the finite morphism $\tilde \gamma$ has constant rank in an analytic neighborhood of $\tilde \gamma$, hence we can find local coordinates on $\tilde Z$ and $X$ such that 
$$
E_i = \{ z_1=0 \}
$$ 
and $\tilde \gamma$ is given locally by
$$
\widetilde{\gamma}: (t, z_2, .. z_n )
\rightarrow 
(t^d, z_2, .., z_n ).
$$
The exterior power $\Omega^p _{X} (\log E)_x$ is generated by
$\{\frac{dz_1}{z_1} \wedge d z_J, d z_I \}$
where $J \subset \{2,\cdots, n\}$ has length $p-1$ and $I \subset \{2,\cdots, n\}$ has length $p$.
Thus we obtain a basis $\{e_1, \cdots, e_k\}$ of $S^m \Omega_{X}  (\log E)_x$ by taking products of length $m$, where each $e_i$ is of type:
$$e_i = (\frac{dz_1}{z_1} \wedge d z_{J_1}) \otimes (\frac{dz_1}{z_1} \wedge d z_{J_2})\otimes \dots \otimes (\frac{dz_1}{z_1} \wedge d z_{J_q}) \otimes d z_{I_1 }\otimes \dots \otimes dz_{I_{m-q}}. $$

In our local coordinates the pull-back becomes
$$\widetilde{\gamma}^\star (e_i)
= (\frac{dt}{t}\wedge d z_{J_1}) \otimes (\frac{dt}{t} \wedge d z_{J_2 })\otimes \dots \otimes (\frac{dt }{t} \wedge d z_{J_q}) \otimes d z_{I_1 }\otimes \dots \otimes dz_{I_{m-q}} .$$
In particular, the pull back $\{\widetilde{\gamma}^\star (e_i)\}_{i=1}^k$ is a basis of $S^m \Omega_{\widetilde{Z}}  (\log E_Z)$ at $\tilde x$.

Let $\sigma$ be a generator of $\mathcal{A}^{[m]}$ at $\pi (x)\in Y$. Then $\pi^\star\sigma \in \pi^\star \mathcal{A}^{[m]} \subset S^m \Omega_{X}(\log E)$ is a local generator near $x$. 
We can write 
$$\pi^\star\sigma = \sum f_i e_i ,$$
where $f_i$ are holomorphic functions near $x$.
Now recall that by Remark \ref{remark-klt-case} 
$$
\pi_Z^\star \mathcal B^{\otimes m} \simeq \pi_Z^\star \gamma^\star \mathcal A^{[m]}
\simeq \tilde \gamma^\star \pi^\star \mathcal A^{[m]}
$$
is a subsheaf of $S^{[m]} \Omega^{[p]}_{\widetilde{Z}}$.
In particular, since $\tilde Z$ is smooth in $\tilde x$, we have
$$
(\widetilde{\gamma}\circ \pi)^\star \sigma \in (S^m \Omega^p_{\widetilde{Z}})_{\tilde x}.
$$
As a consequence, $f_i (x)=0$ when $e_i$ is of type
$$
e_i = (\frac{dz_1}{z_1} \wedge d z_{J_1}) \otimes (\frac{dz_1}{z_1} \wedge d z_{J_2})\otimes \dots \otimes (\frac{dz_1}{z_1} \wedge d z_{J_m}),
$$
since this generator of $(S^m \Omega^p_{\widetilde{Z}}(\log E_Z))_{\tilde x}$ is not 
contained in $(S^m \Omega^p_{\widetilde{Z}})_{\tilde x}$.

\medskip

Now we can prove the proposition. Near a general point $x\in E_i$, we suppose that $\mathcal{C}_x \subset (\Omega^p_{\widetilde{Z}})_{\tilde x}$ is generated by 
$$\sum g_i  \cdot (\frac{dz_1}{z_1} \wedge d z_{J_i}) + \sum h_i \cdot d z_{I_i} ,$$
where $g_i, h_i$ are holomorphic functions. 
Thanks to Lemma \ref{lemma-pullback}, we have
$$F \cdot (\sum g_i  (\frac{dz_1}{z_1} \wedge d z_{J_i}) + \sum h_i d z_{I_i} )^{\otimes m} =   (\sum f_i e_i ) ,$$
where $F$ is a holomorphic function near $x$. 
If $\Res_{E_i} (\mathcal{C}) \neq 0$, we know that there is one  $i_0$ such that $g_{i_0} (x) \neq 0$. 
Set 
$$e_{i_0} :=(\frac{dz_1}{z_1} \wedge d z_{J_{i_0}})^{\otimes m} .$$
Then $ F \cdot g_{i_0} ^m =f_{i_0}$. By the above paragraph, we know that $f_{i_0} (x)=0$.
Then $F (x)=0$.
The proposition is thus proved.
\end{proof}

We are now in the position to verify
the technical condition in Proposition \ref{l2version}:

\begin{thm} \label{theorem-conclusion}
In the setting of Theorem \ref{theorem-main}, let $\holom{\pi}{X}{Y}$ be a log-resolution
and denote by $E$ the exceptional locus. Let $L \subset \Omega_{X}^p(\log E)$
be the saturation of $\pi^\star \sA$, and let $\tilde u \in  H^0 (X, \Omega_{X} ^p (\log E) \otimes L^\star)$ the corresponding section.
Then there exists a metric $h_1$ on $L$ such that
we have $D'_{h_1 ^\star} \tilde u =0$ on $X\setminus E$
\end{thm}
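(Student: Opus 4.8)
The plan is to reduce the statement to Proposition \ref{l2version} by equipping $L = \mathcal C$ with a carefully chosen singular metric $h_1$ whose weight is singular enough along precisely those components $E_i$ where $\tilde u$ has a genuine logarithmic pole. First I would record the structure of $L$: since $L = \mathcal C$ is the saturation of $\pi^\star \sA$ in $\Omega_X^p(\log E)$ and $X$ is smooth, $L$ is a line bundle, and by Lemma \ref{lemma-pullback} we may write
\[
L^{\otimes m} = \mathcal C^{\otimes m} = \pi^\star \sA^{[m]} \otimes \sO_X\Big(\sum_i a_i E_i\Big), \qquad a_i \geq 0 .
\]
Because $\sA^{[m]}$ is pseudoeffective, Corollary \ref{corollary-pseff} guarantees that $L$ is pseudoeffective.

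Next I would build the metric. On $\pi^\star \sA^{[m]}$ I take the pull-back of a singular metric on $\sA^{[m]}$ with semipositive curvature current; on $\sO_X(\sum a_i E_i)$ I take the canonical singular metric whose curvature current is the integration current $[\sum a_i E_i] \geq 0$, i.e. whose local weight is $\sum_i a_i \log|s_{E_i}|^2$ up to a smooth term. The tensor product is a metric on $L^{\otimes m}$ with curvature current $\geq 0$; extracting an $m$-th root yields a singular metric $h_1 = e^{-\varphi} h_0$ on $L$ with $i\Theta_{h_1}(L) \geq 0$, whose weight $\varphi$ inherits from the second factor a Lelong number along each $E_i$ proportional to $a_i/m$.

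The crucial step is to match the residue hypothesis of Proposition \ref{l2version} with the positivity of the $a_i$. After reordering, suppose $\Res_{E_i}(\tilde u) \neq 0$ for $1 \leq i \leq k$ and $\Res_{E_i}(\tilde u) = 0$ for $k < i \leq r$. Since $\tilde u$ is the section corresponding to the inclusion $L \hookrightarrow \Omega_X^p(\log E)$ and $L^\star$ is a line bundle, its residue along $E_i$ is nonzero exactly when $\Res_{E_i}(\mathcal C) \neq 0$. By Lemma \ref{lemma-residue} this forces $a_i > 0$ for every $i \leq k$, so $\varphi$ has strictly positive Lelong number along each such $E_i$. By the Remark following Proposition \ref{l2version}, this implies that $\varphi$ satisfies the bound \eqref{upper}. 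I would then apply Proposition \ref{l2version} to $(L, h_1)$ and $\tilde u$ to conclude $D'_{h_1^\star} \tilde u = 0$ on $X \setminus E$.

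The main obstacle is the delicate interplay between logarithmic poles and positivity, and it is essentially resolved already by Lemma \ref{lemma-residue}: a priori the saturation of $\pi^\star \sA$ in $\Omega_X^p$ (without log poles) need not be pseudoeffective, and even with log poles one must guarantee that the curvature weight is sufficiently singular precisely along the components where $\tilde u$ has a true logarithmic pole. Lemma \ref{lemma-residue} supplies exactly this, since a nonzero residue forces $a_i > 0$; this is what converts the mere pseudoeffectivity provided by Corollary \ref{corollary-pseff} into the quantitative singularity estimate \eqref{upper} needed to run Demailly's integration-by-parts argument through Proposition \ref{l2version}.
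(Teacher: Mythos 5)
Your proposal is correct and follows essentially the same route as the paper: the decomposition $\mathcal C^{\otimes m} = \pi^\star \sA^{[m]} \otimes \sO_X(\sum a_i E_i)$ from Lemma \ref{lemma-pullback}, the strict positivity $a_i>0$ along components with nonzero residue via Lemma \ref{lemma-residue}, the induced metric with positive Lelong numbers there, and the conclusion via the Remark after Proposition \ref{l2version} and Proposition \ref{l2version} itself. In fact you spell out the construction of $h_1$ (pull-back metric tensored with the canonical singular metrics on $\sO_X(a_iE_i)$, then an $m$-th root) more explicitly than the paper does.
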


\begin{proof}
By Lemma \ref{lemma-pullback}, we know that 
	\begin{equation}\label{inclusion2}
		c_1(L) = \frac{1}{m} \pi^\star c_1(\sA^{[m]}) + \sum_{i\in I} a_i E_i  +\sum_{i\in I' } a_i E_i,
	\end{equation}
	such that all the coefficients $a_i \geq 0$ and the
$i\in I$ correspond to the exceptional divisors $E_i$ such that $\Res_{E_i} (\mathcal{C}) \neq 0$ and $i\in I'$ corresponds to $\Res_{E_i} (\mathcal{C}) = 0$.  By Lemma \ref{lemma-residue} we have $a_i >0$ when $i\in I$.
Let $h_0$ be a possibly singular metric on $\pi^\star \sA ^{[m]}$ such that
$i\Theta_{h_0} (\pi^\star \sA^{[m]})\geq 0$. By \eqref{inclusion2}
this induces a metric $h_1$ on $L$.
Thanks to Proposition \ref{l2version},  the theorem is proved.
\end{proof}

\section{Proof of the main results}

The setup for the proof of Theorem \ref{theorem-main} and Proposition \ref{proposition-lc} is the same:
the non-zero section $u$ determines an injective morphism of sheaves
$$
\sA \hookrightarrow \Omega_Y^{[p]}.
$$
Let $\holom{\pi}{X}{Y}$ be a log-resolution of $Y$, and denote by $E$ the exceptional locus. Since $Y$ is lc, we have the tangent map \eqref{equation-pull-back-lc}
$$
\holom{d \pi}{\pi^\star \Omega_Y^{[p]}}{ \Omega_{X}^{p}(\log E)},
$$
and we denote by $L \subset \Omega^{p}_{X}(\log E)$ the saturation of $\pi^\star \sA$.
By Lemma \ref{lemma-pullback} there exists a morphism
$\pi^\star \sA^{[m]} \rightarrow L^{\otimes m}$, so $L$ is a pseudoeffective line bundle on $X$.
The inclusion $L \subset \Omega^{p}_{X}(\log E)$ corresponds to a non-zero
holomorphic section
$$
\tilde u \in H^0(X, \Omega_{X}^{p}(\log E) \otimes L^\star)
$$
which coincides with $u$ on $X \setminus E \simeq Y_{\nons}$. In particular
the subsheaf $S_{\tilde u} \subset T_{X}$ defined by contraction with $\tilde u$ coincides with $S_u \subset T_Y$ on a Zariski open set.  
Thus we are left to show the integrability of $S_{\tilde u} \subset T_{X}$
on $X \setminus E$. 
By the formula for  the exterior derivative of $p$-forms (cf. \cite[p.97]{Dem02})
the integrability of $S_{\tilde u}$ follows if we find a metric $h$ on $L$ such that
$D'_{h^\star} \tilde u=0$ on $X \setminus E$.

{\em Proof of Theorem \ref{theorem-main}:} Since $Y$ is klt, the existence of the metric $h$
is guaranteed  by Theorem \ref{theorem-conclusion}.
{\hfill $\square$}

{\em Proof of Proposition \ref{proposition-lc}:}
Since $p=1$ we know by Proposition \ref{pless2} that any singular metric with positive curvature current will suffice. Since $L$ is pseudoeffective, such a metric exists.
{\hfill $\square$}


\end{document}